\numberwithin{equation}{section}
\newtheorem{theorem}{Theorem}[section]
\newtheorem{proposition}[theorem]{Proposition}
\newtheorem{corollary}[theorem]{Corollary}
\newtheorem*{theorem*}{Theorem}
\theoremstyle{definition}
\newtheorem{notation}[theorem]{Notation}
\newtheorem{example}[theorem]{Example}
\newtheorem{remark}[theorem]{Remark}
\definecolor{MyDarkGreen}{cmyk}{0.7,0,1,0}
\def\cocoa{{\hbox{\rm C\kern-.13em o\kern-.07em C\kern-.13em o\kern-.15em A}}}
\begin{document}

\title[The singular locus of a hyperplane arrangement in $\mathbb P^n$]{Schemes supported on the singular locus of a hyperplane arrangement in $\mathbb P^n$}

\author{J.\ Migliore} 
\address{Department of Mathematics \\
University of Notre Dame \\
Notre Dame, IN 46556 USA}
 \email{migliore.1@nd.edu}
 
 \author{U. Nagel} 
\address{Department of Mathematics \\
University of Kentucky \\
715 Patterson Office Tower, \\
Lexington, KY 40506-0027 USA}
 \email{uwe.nagel@uky.edu}
 
 \author{H. Schenck} 
\address{Department of Mathematics \\
Iowa State University \\
Ames, Iowa 50011-2064 USA}
 \email{hschenck@iastate.edu}

\begin{abstract} 
We introduce the use of liaison addition to the study of hyperplane arrangements. 
For an arrangement, $\mathcal A$, of hyperplanes in $\mathbb P^n$,  $\mathcal A$ is free if $R/J$ is Cohen-Macaulay, where $J$ is the Jacobian ideal of $\mathcal A$. Terao's conjecture says that freeness of $\mathcal A$ is determined by the combinatorics of the intersection lattice of $\mathcal A$. We study the Cohen-Macaulayness of three other ideals, all unmixed, that are closely related to $\mathcal A$. 
Let $\overline J = \mathfrak q_1 \cap \dots \cap \mathfrak q_s$ be the intersection of height two primary components of $J$ and $\sqrt{J} = \mathfrak p_1 \cap \dots \cap \mathfrak p_s$ be the radical of $J$.
Our third ideal is $\mathfrak p_1^{b_1} \cap \dots \cap \mathfrak p_s^{b_s}$ for suitable $b_1,\dots, b_s$. With a fairly mild hypothesis we use liaison addition to show that all three of these ideals are Cohen-Macaulay.  When our hypothesis does not hold,  we show that these ideals are not necessarily Cohen-Macaulay, and further that Cohen-Macaulayness of any of these ideals does not imply Cohen-Macaulayness of any of the others.  While we do not study the freeness of $\mathcal A$, we give an example to show that the Betti diagrams can vary even for  arrangements with the same combinatorics.

In the second part of the paper we study the situation when the hypothesis does not hold. For equidimensional curves in $\mathbb P^3$, the Hartshorne-Rao module from liaison theory measures the failure of an ideal to be Cohen-Macaulay, degree by degree, and also determines the even liaison class of such a curve. We show that for any positive integer $r$ there is an arrangement $\mathcal A$ for which $R/\overline J$ fails to be Cohen-Macaulay in only one degree, and this failure is by $r$;  we also give an analogous result for $\sqrt{J}$. We draw consequences for the corresponding even liaison class of the curve defined by $\overline J$ or by $\sqrt{J}$.   
\end{abstract}

\date{\today}

\thanks{
{\bf Acknowledgements}: 
Migliore was partially supported by Simons Foundation grant \#309556.
Nagel was partially supported by Simons Foundation grant \#317096. 
Schenck was partially supported by National Science Foundation grant \#1818646.
}

\keywords{Jacobian ideal, hyperplane arrangement, Cohen-Macaulay, liaison class, liaison addition, basic double linkage}

\subjclass[2010]{14N20 (primary); 52C35; 14M06; 14M07; 14M05; 13D02;      13N15}

\maketitle



\section{Introduction}

Let $R = k[x_0,\dots,x_n]$, where $k$ is a field of characteristic zero.  Let $F$ be a homogeneous polynomial and let $J$ be the Jacobian ideal of $F$. Of course a huge number of papers have studied Jacobian ideals and the schemes that they define. 
In this paper we focus on homogeneous polynomials $F$ defining hyperplane arrangements. Specifically, let $\mathcal A$ be a hyperplane arrangement in $\mathbb P^n$ containing $d$ hyperplanes. Let $F$ be a product of linear forms defining $\mathcal A$ and let $J$ be the Jacobian ideal of $F$. 

The  ideal $J$ has height two. Let $E$ be the  syzygy module of $J$, so we have an exact sequence
\[
0 \rightarrow E \rightarrow \bigoplus_{i=1}^{n+1} R(1-d) \rightarrow J \rightarrow 0,
\]
and let 
\[
J = \mathfrak q_1 \cap \dots \cap \mathfrak q_r
\]
be a primary decomposition of $J$. Let $\mathfrak p_i$ be the associated prime of $\mathfrak q_i$.
Many papers have studied the situation when $E$ is a free module, i.e. when $R/J$ is a Cohen-Macaulay ring.
The papers \cite{schenck} and \cite{MS} studied issues of when the sheafification, $\tilde{E}$, is locally free, and \cite{DS} studied  the saturation, $J^{sat}$, of $J$ with respect to the homogeneous maximal ideal $\mathfrak m$ of $R$, (but in a more general setting, with applications to line arrangements, in the case $n=2$). Notice that the saturation of $J$ means that we remove a primary component whose associated prime is $\mathfrak m$ (if it exists), but that when $n \geq 3$, $J^{sat}$ could still have embedded (but not isolated) components of height $> 2$.  That is, even $J^{sat}$ may fail to be unmixed.

In this paper we take this one step further and remove the remaining components from the primary decomposition that have height $> 2$.   We will be interested in three {\em unmixed} ideals that arise very naturally from $J$. 
First, we consider the intersection of the codimension two ideals in a primary decomposition of $J$, which we will denote $\overline J$. The second ideal that we study is the radical of $J$, denoted $\sqrt{J}$, which is just the intersection of the associated primes of $J$. While these first two ideals are most important for us, from $\sqrt{J} = \bigcap \mathfrak p_i$ we can also replace each height two associated prime $\mathfrak p_i$ with a suitable power $\mathfrak p_i^{b_i}$.

From a geometric point of view, the first two of these ideals define schemes that can be interpreted as representing the singularity of $\mathcal A$, once embedded components have been removed. The ideal $\overline J$ is, in general, non-reduced but its components (taken individually) are complete intersections of height two supported on linear varieties. The radical ideal $\sqrt{J}$ defines the union of linear varieties on which the singularity is supported. The third ideal is less natural, but it is interesting to ask to what extent we can ``fatten up" the components of $\sqrt{J}$ and still get results about Cohen-Macaulayness. In general, the latter scheme is not a local complete intersection.  Our goal is to show that ``most of the time" all three of  these schemes are arithmetically Cohen-Macaulay (ACM), i.e.  $R/\overline{J}$, $R / \sqrt{J}$ and $R/\bigcap \mathfrak p_i^{b_i}$ (with the permitted range of exponents to be described later) are Cohen-Macaulay rings. For the first two we obtain the following  result (Theorem \ref{mainthm}, Corollary~\ref{sing is acm}, Corollary \ref{Pn}):

\medskip

\noindent {\bf Theorem.} {\em Let $\mathcal A$ be a hyperplane arrangement in $\mathbb P^n$ defined by a product, $F$ of linear forms. Let $J$, $\sqrt{J}$ and $\overline J$ be the ideals defined above. 
Assume that no linear factor of $F$  is in the associated prime for any two \underline{non-reduced}  components of $\overline J$. Then both $R/{\overline J}$ and $R/\sqrt{J}$ are Cohen-Macaulay.}

\medskip

\noindent The third ideal, $\bigcap \mathfrak p_i^{a_i}$, will be described in Corollary \ref{fat is acm} and the paragraph preceding it, and our result will be that with the same assumption, this algebra is also Cohen-Macaulay. This will not play as big a role in this paper as the other two ideals.

As an application of the above theorem, in \S \ref{graphic} we consider graphic arrangements, which are certain arrangements arising from graphs. We translate the result mentioned above as follows (see Corollary \ref{graphic result}). Let $G$ be a graph and assume that no two 3-cycles of $G$ share an edge. Let $\mathcal A_G$ be the arrangement derived from $G$ (see \S \ref{graphic}). Then $R/\sqrt{J}$ and $R/\overline{J}$ are Cohen-Macaulay. 

Let us give a geometric explanation of the hypothesis in the above theorem, in the special case of $\mathbb P^3$. The ideal $\overline J$ defines a scheme $\overline V$ supported on a union of lines. Some of the components of $\overline V$ may not be reduced. We allow two or more such non-reduced components to meet, but if they do meet we assume that the plane spanned by the supports is not a plane in the arrangement. Other than that, we do not care how reduced components meet each other or how they meet non-reduced components.

For most of this paper we will focus on the case $n=3$, and  we will obtain the above result by taking general hyperplane sections. 

While the focus in the above theorem is on whether or not a ring is Cohen-Macaulay, in the case of a curve $C$  in $\mathbb P^3$ there is actually a very useful graded module that measures the {\em failure} of $R/I_C$ to be Cohen-Macaulay (i.e. the failure of $C$ to be {\em arithmetically Cohen-Macaulay (ACM)}) degree by degree. This is the so-called Hartshorne-Rao module of $C$, $M(C)$, which we recall in the next section. 
Recall that the Hartshorne-Rao module plays a central role in the even liaison class of $C$. Thus we also study the even liaison class of the schemes $\overline C_F$ and $C_F^{red}$. Proposition \ref{liaison addition for products} and Proposition \ref{BDL for arrangements} show how liaison addition and basic double linkage (two tools from liaison theory) can be applied to arrangements. 

To state the second main result, we refine our  notation so that if $\mathcal A$ is a hyperplane arrangement defined by a product, $F$, of linear forms, then the scheme defined by the top dimensional part of the corresponding Jacobian ideal is denoted by $\overline C_F$ and the scheme defined by the radical $\sqrt{J}$ is $C_F^{red}$. 
The main result of \S \ref{liaison sect} is the following (see Theorem \ref{buchs thm} and Corollary \ref{main thm for rad}), where we show that both curves can be made to fail to be ACM in only one degree, and nevertheless the failure can be made as large as desired! 

\medskip

\noindent {\bf Theorem.} 
{\em 
Let $r \geq 1$ be a positive integer. Then:

\begin{itemize}

\item[\em (i)] There exists a positive integer $N$ and a product of linear forms $F$, defining an arrangement $\mathcal A_F$ in $\mathbb P^3$, such that 
\[
\dim M(\overline C_F)_{N} = r
\]
and all other components of $M(\overline C_F)$ are zero. 

\item[\em (ii)] There exists a positive integer $N'$ and a product of linear forms $F'$, defining an arrangement $\mathcal A_{F'}$ in $\mathbb P^3$, such that 
\[
\dim M( C_{F'}^{red})_{N'} = r
\]
and all other components of $M(C_{F'}^{red})$ are zero. 

\item[\em (iii)] For each $h \geq 1$ we can replace $N$ by $N+h$ and find a polynomial $G$ so that 
\[
\dim M(\overline C_G)_{N+h} = r, 
\]
and all other components of $M(\overline C_G)$ are zero. The curve $\overline C_G$ is in the same even liaison class as $\overline C_F$. The analogous result for $C^{red}$ also holds. 

\end{itemize}
}

As an application of the third part of the above theorem, we note that once an even liaison class has a curve of either type arising from an arrangement, the same class will have infinitely many such curves (see Corollary \ref{buchs even liaison} and Corollary \ref{main thm for rad}).

The idea introduced in this paper and used for both main results is to build up our curves using liaison addition and basic double linkage. Of course these tools have existed for a long time, but the idea of using them in the context of the singular scheme or locus of a hyperplane arrangement is mostly new. (The use of basic double linkage was begun in \cite{GHM3} in the study of star configurations, which in codimension two are a very special case of singular loci of hyperplane arrangements,  but our application here is more general, and the use of liaison addition is entirely new.)

 One important starting point for this paper, particularly the second theorem above, is an example due to Musta\c t\v a and the third author \cite{MS}  which shows that not all planar arrangements in $\mathbb P^3$ give rise to ideals $\overline J$ for which $R/\overline J$ is Cohen-Macaulay. Modifications of this example  are used in our work.


Freeness of $\mathcal A$ is equivalent to $R/J$ being Cohen-Macaulay, and Terao's conjecture  (cf. \cite{OT} Conjecture 4.138)  that freeness of $\mathcal A$ is determined by the combinatorics of the intersection lattice has motivated much work in the field. In this paper we study several natural questions related to the freeness of $\mathcal A$ (but not the freeness question itself).

\begin{itemize}


\item {\em If $\mathcal A$ is free, must $R/\sqrt{J}$ be Cohen-Macaulay?} No (Example \ref{free not cm} gives a counterexample).

\item {\em Does the Cohen-Macaulayness of either $R/\overline J$ or $R/\sqrt{J}$ imply Cohen-Macaulayness of the other?} No (Example \ref{ex poss} shows all combinations can occur).

\item {\em Does the failure of Cohen-Macaulayness for $R/\overline J$ and $R/\sqrt{J}$ imply the failure of Cohen-Macaulayness for $R/\bigcap \mathfrak p_i^{b_i}$?}  No (Example \ref{fat is CM} gives an ideal for which both $R/\sqrt{J}$ and $R/\overline J$ fail to be Cohen-Macaulay, but $R/\bigcap \mathfrak p_i^{b_i}$ is Cohen-Macaulay. In fact, in this example even the symbolic square of $\sqrt{J}$, i.e. the case where $b_i = 2$ for all $i$, is Cohen-Macaulay).

\item {\em Are there other Hartshorne-Rao modules that arise for either $\overline C$ or $C^{red}$ in addition to the ones described in the second theorem above?} Yes (Example \ref{not all classes} shows that both can happen).

\end{itemize}

Our first theorem above shows that under a certain fairly weak condition, both $R/\overline J$ and $R/\sqrt{J}$ must be Cohen-Macaulay, regardless of whether $R/J$ is Cohen-Macaulay or not. In this sense, our results  are independent of the freeness of the arrangement. 
On the other hand,  while we do not address Terao's Conjecture  itself,  we do show that the Betti diagrams of these algebras can vary for arrangements with the same combinatorics (Example \ref{same comb}).

We conclude the paper in \S \ref{open q} with some open questions arising from our work.


\section{Tools} \label{tool section}

Let $R = k[x_0,\dots,x_n]$, where $k$ is a field of characteristic zero.  Recall that a subscheme $V \subset \mathbb P^n$ is {\em arithmetically Cohen-Macaulay} (denoted ACM) if the coordinate ring $R/I_V$ is a Cohen-Macaulay ring. For a general background to the tools used in this paper see \cite{mig-book}.

\begin{notation}  \label{main notation}
Let $\mathcal A$ be a hyperplane arrangement in $\mathbb P^n$. Let $F$ be the product of linear forms defining $\mathcal A$. Let $J = \langle F_{x_0}, \dots, F_{x_n} \rangle$ be the Jacobian ideal of $F$. Note that $J$ is not necessarily saturated, and its saturation is not necessarily unmixed. We make the following notation:

\medskip

\begin{itemize}
\item $J^{sat}$ is the saturation of $J$ with respect to the maximal ideal of $R$. 

\item $\overline{J}$ is the top-dimensional component of $J$. That is, $\overline{J}$ is the intersection of the height two primary ideals in a primary decomposition of $J$.

\item $V$ is the scheme defined by $J$ (i.e. $I_V = J^{sat}$). Note that $V$ may have embedded components, and $V$ is often non-reduced.

\item $\overline V$ is the scheme defined by $\overline J$. Note that $\overline V$ is equidimensional, but it is not necessarily reduced. We refer to $\overline V$ as the {\em top-dimensional part} of $V$.

\item $\sqrt{J}$ is the radical of $J$. Note that $\sqrt{J}$ is the intersection of the height two associated primes of $J$.

\item $V^{red}$ is the scheme defined by $\sqrt{J}$. This is the reduced scheme on which $\overline V$ is supported, and is the set-theoretic singular locus of $\mathcal A$. Note that $V^{red}$ is reduced and equidimensional, supported on a union of codimension two linear varieties.
\end{itemize}
\end{notation}

\begin{notation} \label{FG notation}
If $F$ and $G$ are both products of pairwise independent linear forms with no factors in common then $F$, $G$ and $FG$ all define hyperplane arrangements. In this setting we will denote these arrangements by $\mathcal A_F, \mathcal A_G, \mathcal A_{FG}$ respectively. We will denote the corresponding items in Notation \ref{main notation} with the appropriate subscripts; for instance, $V_{FG}^{red}$ is the scheme defined by the radical ideal $\sqrt{J_{FG}}$, where $J_{FG}$ is the Jacobian ideal of $FG$.
\end{notation}

\begin{notation}
Let $C \subset \mathbb P^3$ be an equidimensional curve with no embedded points. Then we denote by $M(C)$ the {\em Hartshorne-Rao module} of $C$, namely the graded module 
\[
M(C) = \bigoplus_{t \in \mathbb Z} H^1(\mathbb P^3, \mathcal I_C(t)). 
\]
Under our assumptions on $C$, $M(C)$ is a graded module of finite length. It is zero if and only if $C$ is ACM.
\end{notation}

First we recall the construction of liaison addition. This was introduced by P. Schwartau in his Ph.D.\ thesis \cite{Sw}, which was never published. However, a generalization was found and published in \cite{GM4}. The version that we need (and state) is entirely due to Schwartau (except that he gave it only for curves in $\mathbb P^3$), but we cite \cite{GM4} and \cite{mig-book} as the only available sources at this point.

\begin{theorem}[\cite{GM4} Corollary 1.6, Theorem 1.3 and Corollary 1.5, \cite{mig-book} section 3.2] \label{LA}
Let $V_1$ and $V_2$ be locally Cohen-Macaulay, equidimensional codimension two subschemes in $\mathbb P^n$.  Choose polynomials $F_1, F_2$ with $d_i = \deg F_i$ so that 
\[
F_1\in I_{V_1} \ \ \ \hbox{ and } \ \ \ F_2 \in I_{V_2}, 
\]
and furthermore $(F_1,F_2)$ is a regular sequence. Let $V$ be the complete intersection scheme defined by $(F_1,F_2)$. Let $I = F_2 I_{V_1} +  F_1 I_{V_2}$ and let $Z$ be the scheme defined by $I$. Then

\begin{itemize}
\item[(i)] If $V_1,V_2$ and $V$ pairwise have no common components then $Z = V_1 \cup V_2 \cup V$ as schemes.

\item[(ii)] $I$ is a saturated ideal.

\item[(iii)] If $h_X(t)$ denotes the Hilbert function of a scheme $X$ then we have
\[
h_Z(t) = h_V(t) + h_{V_1}(t-d_1) +  h_{V_2}(t-d_2).
\]

\item[(iv)] $Z$ is ACM if and only if both $V_1$ and $V_2$ are  ACM. More generally,  we have
\[
\bigoplus_{t \in \mathbb Z} H^i( \mathbb P^n, \mathcal I_Z (t)) \cong \bigoplus_{t \in \mathbb Z} H^i (\mathbb P^n, \mathcal I_{V_1}(t))(-d_2) \oplus \bigoplus_{t \in \mathbb Z} H^i (\mathbb P^n, \mathcal I_{V_2}(t))(-d_1)
\]
as graded $R$-modules, for $1 \leq i \leq n-2$. If the schemes are curves in $\mathbb P^3$, these cohomology modules are simply the Hartshorne-Rao modules.

\end{itemize}
\end{theorem}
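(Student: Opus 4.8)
\medskip
\noindent\emph{Proposed approach.} First I would record the map of graded $R$-modules
\[
\phi\colon\ I_{V_1}(-d_2)\oplus I_{V_2}(-d_1)\ \longrightarrow\ R,\qquad (a,b)\ \longmapsto\ F_2a+F_1b,
\]
whose image is by definition $I=F_2I_{V_1}+F_1I_{V_2}$. Since $F_1,F_2$ is a regular sequence in the domain $R$, the identity $F_2a=-F_1b$ forces $a=F_1c$ and then $b=-F_2c$ for a unique $c\in R$; conversely each pair $(F_1c,-F_2c)$ automatically lies in $I_{V_1}(-d_2)\oplus I_{V_2}(-d_1)$ (because $F_1\in I_{V_1}$ and $F_2\in I_{V_2}$) and is killed by $\phi$. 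This yields the exact sequence
\[
0\ \longrightarrow\ R(-d_1-d_2)\ \xrightarrow{\ c\,\mapsto\,(F_1c,\,-F_2c)\ }\ I_{V_1}(-d_2)\oplus I_{V_2}(-d_1)\ \xrightarrow{\ \phi\ }\ I\ \longrightarrow\ 0,
\]
which uses nothing beyond the regular sequence hypothesis and is the engine for parts (ii)--(iv).

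For (ii), I would sheafify this sequence; since sheafification is exact and $\widetilde{I}=\mathcal I_Z$ by the definition of $Z$, it becomes
\[
0\ \longrightarrow\ \mathcal O_{\mathbb P^n}(-d_1-d_2)\ \longrightarrow\ \mathcal I_{V_1}(-d_2)\oplus\mathcal I_{V_2}(-d_1)\ \longrightarrow\ \mathcal I_Z\ \longrightarrow\ 0.
\]
Twisting by $t$, taking global sections, and summing over $t$, one uses that $I_{V_1}$ and $I_{V_2}$ are saturated (so their twisted global sections recover them) and that $H^1(\mathbb P^n,\mathcal O(j))=0$ for all $j$ (here $n\ge 2$) to obtain a short exact sequence with the same outer modules but with $\Gamma_*(\mathcal I_Z)=I^{\mathrm{sat}}$ in place of $I$. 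Comparing it with the algebraic sequence through the natural inclusion $I\hookrightarrow I^{\mathrm{sat}}$ and invoking the short five lemma gives $I=I^{\mathrm{sat}}$, which is (ii). Part (iii) then follows by a degreewise dimension count: the algebraic sequence expresses $\dim_k I_t$ in terms of the Hilbert functions of $I_{V_1}$, $I_{V_2}$ and $R$, and combining this with $I_Z=I$ and with the Koszul resolution of the complete intersection $V=V(F_1,F_2)$ (which computes $h_V$) gives the stated identity for $h_Z$.

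For (iv), I would twist the sheafified sequence by $t$ and take cohomology. Because $H^i(\mathbb P^n,\mathcal O(j))=0$ for $1\le i\le n-1$ and all $j$, both the $H^i$ and $H^{i+1}$ terms of the line bundle vanish whenever $1\le i\le n-2$, so that $H^i(\mathcal I_{V_1}(t-d_2))\oplus H^i(\mathcal I_{V_2}(t-d_1))\xrightarrow{\ \sim\ } H^i(\mathcal I_Z(t))$; summing over $t$ gives the asserted isomorphism of graded modules. The equivalence ``$Z$ is ACM $\iff$ $V_1$ and $V_2$ are both ACM'' then follows from the standard fact that a codimension two subscheme $X\subset\mathbb P^n$ is ACM exactly when $I_X$ is saturated and $\bigoplus_t H^i(\mathbb P^n,\mathcal I_X(t))=0$ for $1\le i\le n-2$ --- using (ii) for the saturation of $I_Z$ and the hypothesis for that of the $I_{V_j}$; when $n=3$ these modules are precisely the Hartshorne--Rao modules.

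The step I expect to be the real work is (i), the one genuinely local assertion. I would prove the ideal equality $I=I_{V_1}\cap I_{V_2}\cap(F_1,F_2)$ by checking it after localizing at each homogeneous prime $\mathfrak p$ and splitting into cases according to which of $I_{V_1}$, $I_{V_2}$, $(F_1,F_2)$ lie in $\mathfrak p$. When at most one of $I_{V_1}$, $I_{V_2}$ is contained in $\mathfrak p$ the check is routine, since one of $F_1,F_2$ becomes a unit. The delicate case is $\mathfrak p\supseteq I_{V_1}$ together with $F_2\in\mathfrak p$ (and symmetrically): there one needs $F_2$ to be a nonzerodivisor modulo $I_{V_1}$, and this is exactly where the hypotheses enter --- a component of $V_1$ lying in $V(F_2)$ would also lie in $V(F_1)$ (as $F_1\in I_{V_1}$), hence be a common component of $V_1$ and $V$, which is excluded; combined with local Cohen--Macaulayness of $V_1$ (no embedded components) this forces the nonzerodivisor property and the localized intersection collapses correctly. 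The ``$V_1,V_2$ share no component'' hypothesis disposes of the remaining case where $\mathfrak p$ contains both $I_{V_1}$ and $I_{V_2}$ (such a $\mathfrak p$ then has height $\ge 3$). Since sheaf equality is local, the scheme identity $Z=V_1\cup V_2\cup V$ in (i) follows.
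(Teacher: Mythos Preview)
The paper does not supply its own proof of this theorem; it is quoted from \cite{GM4} and \cite{mig-book} as a background tool. Your approach via the short exact sequence
\[
0 \longrightarrow R(-d_1-d_2) \longrightarrow I_{V_1}(-d_2)\oplus I_{V_2}(-d_1) \longrightarrow I \longrightarrow 0
\]
is the standard one and is essentially what appears in those references; your arguments for (ii)--(iv) are correct.

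There is one small gap in your treatment of (i). In the final case, where $\mathfrak p$ contains both $I_{V_1}$ and $I_{V_2}$, you observe (using that $V_1$ and $V_2$ share no component) that such a prime has height $\ge 3$ and assert that this ``disposes of'' the case. But height $\ge 3$ alone does not finish the argument: both $I$ and $I_{V_1}\cap I_{V_2}\cap(F_1,F_2)$ are saturated, so their equality must still be verified at every homogeneous prime short of the irrelevant one, not only at primes of height $\le 2$. The fix is immediate and uses the very mechanism you already set up for the ``delicate'' case: at such a $\mathfrak p$, write $g=aF_1+bF_2$ with $g\in (I_{V_1})_{\mathfrak p}\cap(I_{V_2})_{\mathfrak p}$; then $bF_2=g-aF_1\in(I_{V_1})_{\mathfrak p}$ and $aF_1=g-bF_2\in(I_{V_2})_{\mathfrak p}$, and your nonzerodivisor argument --- applied once for $F_2$ modulo $I_{V_1}$ and once for $F_1$ modulo $I_{V_2}$, using that neither $V_j$ shares a component with $V$ --- yields $b\in(I_{V_1})_{\mathfrak p}$ and $a\in(I_{V_2})_{\mathfrak p}$, hence $g\in I_{\mathfrak p}$. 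Incidentally, this also shows that the hypothesis ``$V_1$ and $V_2$ share no component'' is not needed separately for (i): any common component of $V_1$ and $V_2$ would lie in $V(F_1)\cap V(F_2)=V$ and thus already be excluded by the other two pairwise conditions.
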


%
%
%
%
%
%
%
%

The following construction, basic double linkage,  is obtained from liaison addition. We write down only the version that we need, but a much more general version exists, the so-called {\em basic double G-linkage} -- cf. for instance \cite{MN3} Lemma 3.4 and \cite{mig-book} Theorem~3.2.3 and Remark~3.2.4. Our version follows from liaison addition 
by taking $V_2$ to be the empty set and $I_{V_2} = R$.

\begin{proposition} \label{BDL}
Let $V_1$ be a locally Cohen-Macaulay, equidimensional  codimension two subscheme in $\mathbb P^n$. Choose polynomials $F_1, F_2$ with $d_i = \deg F_i$ and $F_1 \in I_{V_1}$ and $F_2 \in R$, such that $(F_1,F_2)$ is a regular sequence. Let $V$ be the complete intersection scheme defined by $(F_1,F_2)$. Let $I = F_2 I_{V_1} + (F_1)$ and let $Z$ be the scheme defined by $I$. Then

\begin{itemize}

\item[(i)] If $V_1$ and $V$ have no common components then $Z = V_1 \cup V$.

\item[(ii)] $I$ is a saturated ideal.

\item[(iii)] $Z$ is ACM if and only if $V_1$ is ACM. More generally, 
\[
\bigoplus_{t \in \mathbb Z} H^i( \mathbb P^n, \mathcal I_Z (t)) \cong \bigoplus_{t \in \mathbb Z} H^i (\mathbb P^n, \mathcal I_{V_1}(t))(-d_2) 
\]
for $1 \leq i \leq n-2$.

\item[(iv)] $Z$ is linked in two steps to $V_1$.

\end{itemize}

\end{proposition}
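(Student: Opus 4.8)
The strategy is to obtain parts (i)--(iii) by specializing Theorem \ref{LA} to the degenerate situation $V_2 = \emptyset$, $I_{V_2} = R$, and to establish the two-step linkage statement (iv) by exhibiting the linking complete intersections explicitly. So set $V_2 = \emptyset$, meaning $I_{V_2} = R$ and $\mathcal I_{V_2} = \mathcal O_{\mathbb P^n}$. Every hypothesis of Theorem \ref{LA} then holds: $V_1$ is locally Cohen-Macaulay and equidimensional of codimension two, $F_1 \in I_{V_1}$ and $F_2 \in R = I_{V_2}$, and $(F_1,F_2)$ is a regular sequence, so the scheme $V$ cut out by $(F_1,F_2)$ is an honest codimension-two complete intersection. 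The liaison addition ideal is $F_2 I_{V_1} + F_1 I_{V_2} = F_2 I_{V_1} + F_1 R = F_2 I_{V_1} + (F_1)$, which is exactly the ideal $I$ of the Proposition, so the scheme $Z$ agrees. Now (i) follows from Theorem \ref{LA}(i): the empty scheme shares no component with $V_1$ or $V$, so the pairwise condition reduces to ``$V_1$ and $V$ share no component,'' giving $Z = V_1 \cup V_2 \cup V = V_1 \cup V$. Part (ii) is immediate from Theorem \ref{LA}(ii). For (iii): the first assertion of Theorem \ref{LA}(iv), applied with $V_2 = \emptyset$ (trivially ACM), gives that $Z$ is ACM iff $V_1$ is ACM; and the cohomology isomorphism of Theorem \ref{LA}(iv) reads, for $1 \le i \le n-2$,
\[
\bigoplus_{t} H^i(\mathbb P^n, \mathcal I_Z(t)) \cong \bigoplus_{t} H^i(\mathbb P^n, \mathcal I_{V_1}(t))(-d_2) \oplus \bigoplus_{t} H^i(\mathbb P^n, \mathcal O_{\mathbb P^n}(t))(-d_1),
\]
where the last summand vanishes because $H^i(\mathbb P^n, \mathcal O(t)) = 0$ for $1 \le i \le n-1$. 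This is the isomorphism claimed in (iii).

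For (iv) I would produce the two links. Since $\mathrm{ht}\, I_{V_1} = 2$, the ideal $I_{V_1}$ lies in no minimal prime of the principal ideal $(F_1)$, so by (graded) prime avoidance there is a homogeneous $G \in I_{V_1}$ outside all of them; then $(F_1,G)$ is a regular sequence. Let $W$ be the scheme linked to $V_1$ by the complete intersection $(F_1,G)$, so $I_W = (F_1,G) : I_{V_1} \supseteq (F_1,G)$, and in particular $F_1, G \in I_W$. Because $(F_1,F_2)$ is a regular sequence, $F_2$ is a nonzerodivisor modulo $F_1$, hence so is $F_2 G$ (a product lies in a minimal prime of $(F_1)$ only if a factor does), so $(F_1, F_2 G)$ is a regular sequence, and $F_2 G \in (G) \subseteq I_W$. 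Let $Z'$ be the scheme linked to $W$ by $(F_1, F_2 G)$, so $I_{Z'} = (F_1, F_2 G) : I_W$. It then remains to verify the colon-ideal identity
\[
(F_1, F_2 G) : \big( (F_1,G) : I_{V_1} \big) = F_2 I_{V_1} + (F_1),
\]
which identifies $Z' = Z$ and shows $Z$ is linked to $V_1$ in two steps. The inclusion ``$\supseteq$'' is routine: $F_1 \in I_{Z'}$ trivially, and $F_2 I_{V_1} \cdot I_W \subseteq F_2 (F_1,G) = (F_1, F_2 G)$ since $I_{V_1} I_W \subseteq (F_1,G)$. The reverse inclusion is the substantive point and uses that linkage is an involution, i.e.\ $(F_1,G) : I_W = I_{V_1}$, to recover the generators of $I_{V_1}$ modulo $F_1$ from an element of $I_{Z'}$.

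The main obstacle is part (iv). Parts (i)--(iii) are bookkeeping once the degenerate case of Theorem \ref{LA} is set up, but Theorem \ref{LA} says nothing about linkage, so the two-step-linkage statement is not a formal specialization: it requires the explicit construction of $W$ and $Z'$ and the colon-ideal computation displayed above. That computation is classical (it is the content of basic double linkage), but it is the one genuinely non-formal ingredient, and care is needed to check that the auxiliary form $G$ exists and that all the intermediate ideals $(F_1,G)$, $I_W$, $(F_1, F_2 G)$ remain unmixed of height two so that the linkage steps are legitimate.
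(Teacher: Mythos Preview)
Your approach is correct and matches the paper's: for (i)--(iii) the paper simply says ``our version follows from liaison addition by taking $V_2$ to be the empty set and $I_{V_2}=R$,'' which is exactly your specialization, and for (iv) the paper defers entirely to the cited references \cite{MN3}, \cite{mig-book} rather than giving an argument. Your explicit two-step linkage construction (link $V_1$ to $W$ via $(F_1,G)$, then $W$ to $Z$ via $(F_1,F_2G)$, and verify the colon-ideal identity using that linkage is an involution) is the standard proof found in those references, so you have supplied more detail than the paper does, not a different argument.
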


Although our first main result is for arrangements in $\mathbb P^n$, we only need the versions of  Theorem \ref{LA} and Proposition \ref{BDL} in $\mathbb P^3$. Following Notation~\ref{FG notation}, we will use the following  adaptation.  First we apply liaison addition to show how to combine arrangements (subject to a reasonable hypothesis) in order to obtain new curves with bigger Hartshorne-Rao module.

\begin{proposition} \label{liaison addition for products}
Let $F = L_1\dots L_m$ and $G = M_1 \dots M_p$ be products of pairwise independent linear forms in $R = k[x_0,x_1,x_2,x_3]$ with $m \geq 2$ and $p \geq 2$ such that $G$ does not vanish on any component of $C_F^{red}$ and $F$ does not vanish on any component of $C_G^{red}$. Then
\[
\begin{array}{c}
M(\overline C_{FG}) \cong M(\overline C_F)(-p) \oplus M(\overline C_G)(-m) \\
\hbox{and} \\
M(C_{FG}^{red}) \cong M(C_F^{red})(-p) \oplus M(C_G^{red})(-m).
\end{array}
\]
\end{proposition}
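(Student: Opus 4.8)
The plan is to deduce this from liaison addition (Theorem~\ref{LA}) applied to the two curves $\overline C_F$ and $\overline C_G$ (respectively $C_F^{red}$ and $C_G^{red}$), with the two defining polynomials taken to be $F$ and $G$ themselves. First I would check the hypotheses of Theorem~\ref{LA}. The curves $\overline C_F$ and $\overline C_G$ are equidimensional codimension two subschemes of $\mathbb P^3$, and each is locally Cohen-Macaulay: $\overline C_F$ is a union of complete intersection curves (each primary component of $\overline J_F$ of height two is a complete intersection supported on a line, as noted in the introduction), hence locally Cohen-Macaulay, and similarly $C_F^{red}$ is a reduced union of lines, which is locally Cohen-Macaulay away from finitely many points — here one must be slightly careful and either invoke that a reduced union of lines in $\mathbb P^3$ is locally Cohen-Macaulay in codimension one, or observe that liaison addition only requires the local Cohen-Macaulay and equidimensional conditions which do hold for these specific unions of linear varieties. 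Next, $F \in I_{\overline C_F}$ because $F$ vanishes on $V_F$ (the full singular scheme), hence on its top-dimensional part $\overline C_F$, and similarly $F \in I_{C_F^{red}}$; likewise $G$ lies in the relevant ideals of the $G$-curves. Finally, $(F,G)$ is a regular sequence because $F$ and $G$ have no common linear factor, so they are coprime in the UFD $R$.

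The key computational input is the identity $J_{FG} = G^2 \cdot J_F + F^2 \cdot J_G + (FG)$ relating Jacobian ideals, or more precisely the statement that the height two part (respectively the radical) of $J_{FG}$ agrees with the liaison addition ideal $G\cdot I_{\overline C_F} + F \cdot I_{\overline C_G}$ (respectively $G \cdot I_{C_F^{red}} + F \cdot I_{C_G^{red}}$). The hypothesis that $G$ does not vanish on any component of $C_F^{red}$ and $F$ does not vanish on any component of $C_G^{red}$ guarantees, via part~(i) of Theorem~\ref{LA}, that $C_F^{red}$, $C_G^{red}$ and the complete intersection $V(F,G)$ pairwise share no components, so the liaison addition scheme is exactly their union; one then identifies this union with $C_{FG}^{red}$ set-theoretically (the singular locus of $\mathcal A_{FG}$ is the union of the singular loci of $\mathcal A_F$, of $\mathcal A_G$, and of the pairwise intersections $L_i \cap M_j$, the last being precisely $V(F,G)$) and checks the scheme structures match. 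The analogous bookkeeping for $\overline C_{FG}$ uses that the non-reduced structure of $\overline C_{FG}$ along the singular locus of $\mathcal A_F$ comes only from the $L_i$'s (multiplying by $G$ shifts degrees but does not change the multiplicity structure there).

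Once the ideal-theoretic identification $I_{Z} = G\,I_{V_1} + F\,I_{V_2}$ with $(V_1, V_2) = (\overline C_F, \overline C_G)$ and $Z = \overline C_{FG}$ is in place (and similarly for the reduced versions), the isomorphism of Hartshorne-Rao modules is immediate from part~(iv) of Theorem~\ref{LA}: with $d_1 = \deg F = m$ and $d_2 = \deg G = p$, the theorem gives
\[
M(\overline C_{FG}) \cong M(\overline C_F)(-p) \oplus M(\overline C_G)(-m),
\]
and the same with $\overline C$ replaced by $C^{red}$.

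The main obstacle I expect is the ideal-theoretic identification in the previous paragraph — i.e.\ proving that the top-dimensional part (resp.\ radical) of $J_{FG}$ is literally the liaison addition ideal built from the top-dimensional parts (resp.\ radicals) of $J_F$ and $J_G$, not merely that the two define the same scheme set-theoretically. This requires understanding precisely how the Jacobian ideal of a product of linear forms decomposes: the singular locus of $\mathcal A_{FG}$ decomposes into the part coming from $\mathcal A_F$ alone (where near a generic point the arrangement looks like $\mathcal A_F$ times the nonvanishing factor $G$), the part from $\mathcal A_G$ alone, and the ``new'' codimension two flats $L_i \cap M_j$ which form a complete intersection cut out by $F$ and $G$ — and one needs the hypothesis on non-vanishing to ensure these three pieces are genuinely distinct and that the saturation/unmixedness behaves well. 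A clean way to organize this is to localize at each height two associated prime and compare the two ideals there, using that both are saturated (part~(ii) of Theorem~\ref{LA} on one side, and the definition of $\overline J$ resp.\ $\sqrt J$ on the other) together with agreement of Hilbert functions via part~(iii); alternatively one works out the local structure of a product of linear forms at a codimension two flat explicitly, which is an elementary but slightly delicate computation that I would not grind through here.
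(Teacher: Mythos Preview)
Your approach is correct and is exactly the paper's approach; the paper's entire proof is the single sentence ``Since $F \in I_{\overline C_F}$ and $G \in I_{\overline C_G}$, this follows from Theorem~\ref{LA}.'' You have simply unpacked what that sentence entails: checking the hypotheses of liaison addition, and --- more importantly --- verifying that the liaison addition scheme $Z$ defined by $G\cdot I_{\overline C_F} + F\cdot I_{\overline C_G}$ really is $\overline C_{FG}$ (and similarly for the reduced version). The paper leaves this identification implicit, but your outline of how to check it (decompose the codimension-two flats of $\mathcal A_{FG}$ into those coming from $F$, those from $G$, and the new reduced lines $L_i\cap M_j$ forming the complete intersection $V(F,G)$, then invoke part~(i) of Theorem~\ref{LA}) is the right one and is exactly what the authors do in the parallel argument inside the proof of Theorem~\ref{mainthm}.

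One minor point: your hesitation about whether a reduced union of lines in $\mathbb P^3$ is locally Cohen-Macaulay is unnecessary. Any reduced one-dimensional scheme is Cohen-Macaulay, since reduced implies $S_1$ and for curves $S_1$ is Cohen-Macaulay; there is nothing delicate here. Likewise the identity $J_{FG}=G^2 J_F + F^2 J_G + (FG)$ is true but not actually needed --- once you know $F\in I_{\overline C_F}$, $G\in I_{\overline C_G}$, and the three pieces share no components (which your argument from the non-vanishing hypothesis establishes), Theorem~\ref{LA}(i) hands you the scheme identification directly, and part~(iv) finishes.
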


\begin{proof}
Since $F \in I_{\overline C_F}$ and $G \in I_{\overline C_G}$, this follows from Theorem \ref{LA}.
\end{proof}

This result gives us information, in some cases, about the union of two arrangements.

\begin{corollary} \label{LA for products cor}
With the assumptions of Proposition \ref{liaison addition for products}, if $\overline C_F$ and $\overline C_G$ (respectively $C_F^{red}$ and $C_G^{red}$) are both ACM then $\overline C_{FG}$ (respectively $C_{FG}^{red}$) is ACM. 
\end{corollary}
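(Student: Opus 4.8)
The plan is to deduce the statement immediately from Proposition~\ref{liaison addition for products} together with the characterization, recorded earlier, that an equidimensional curve in $\mathbb P^3$ without embedded points is ACM precisely when its Hartshorne-Rao module vanishes. First I would note that the hypotheses of Proposition~\ref{liaison addition for products} are exactly what we are assuming, so the isomorphisms
\[
M(\overline C_{FG}) \cong M(\overline C_F)(-p) \oplus M(\overline C_G)(-m), \qquad M(C_{FG}^{red}) \cong M(C_F^{red})(-p) \oplus M(C_G^{red})(-m)
\]
are available. Now if $\overline C_F$ and $\overline C_G$ are both ACM, then $M(\overline C_F) = 0$ and $M(\overline C_G) = 0$; since a graded shift of the zero module is again zero, the right-hand side of the first isomorphism is zero, hence $M(\overline C_{FG}) = 0$. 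The same reasoning applied to the second isomorphism gives $M(C_{FG}^{red}) = 0$ when $C_F^{red}$ and $C_G^{red}$ are ACM.

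To conclude ``$M = 0 \implies$ ACM'' for $\overline C_{FG}$ and for $C_{FG}^{red}$, I would observe that both of these schemes are equidimensional curves in $\mathbb P^3$ with no embedded points: $\overline C_{FG}$ is cut out by $\overline{J_{FG}}$, which is by construction the unmixed, height-two part of the Jacobian ideal, and $C_{FG}^{red}$ is cut out by the radical $\sqrt{J_{FG}}$, which is reduced and equidimensional of height two. Hence the equivalence ``ACM $\iff$ Hartshorne-Rao module zero'' applies to each of them, and the vanishing established in the previous paragraph yields that $\overline C_{FG}$ (resp.\ $C_{FG}^{red}$) is ACM. There is essentially no obstacle here; the only point deserving a word of justification is precisely this last applicability check, so that we are entitled to read ACM-ness off the vanishing of the module rather than off a genuine depth computation.
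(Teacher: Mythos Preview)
Your proof is correct and is exactly the argument the paper has in mind: the corollary is stated immediately after Proposition~\ref{liaison addition for products} with no separate proof, since (as recorded in the notation section) an equidimensional curve without embedded points is ACM if and only if its Hartshorne-Rao module vanishes. Your extra sentence checking that $\overline C_{FG}$ and $C_{FG}^{red}$ are indeed equidimensional without embedded points is a reasonable bit of tidiness that the paper leaves implicit.
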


\begin{remark}
Notice that Corollary \ref{LA for products cor} does not show that if $\mathcal A_F$ and $\mathcal A_G$ are free then $\mathcal A_{FG}$ is free. Indeed, a simple example is when $F$ and $G$ both consist of products of three general linear forms. Then $\mathcal A_F$ and $\mathcal A_G$ are free, but $\mathcal A_{FG}$ is not. The only conclusion from Corollary \ref{LA for products cor} is that $\overline C_{FG}$ and $C_{FG}^{red}$ are both ACM (and in fact they are equal and form a star configuration, so this is already known \cite{GHM3}).
\end{remark}

Next we translate the notion of basic double linkage  to our work on arrangements. The notion of basic double linkage for curves in $\mathbb P^3$ was introduced by Lazarsfeld and Rao \cite{LR}.

\begin{proposition} \label{BDL for arrangements}
Let $F$ be a product of linear forms defining a hyperplane arrangement in $\mathbb P^3$.  Let $L$ be a linear form that does not vanish on any component of $C_F^{red}$. Then 

\begin{itemize}

\item[(i)] $M(\overline C_{LF}) \cong M(\overline C_F)(-1)$ and $M(C_{LF}^{red} ) \cong M(C_F^{red})(-1)$. 

\item[(ii)] For the saturated homogeneous ideals we have
\[
I_{\overline C_{LF}} = L \cdot I_{\overline C_F} + (F) \hbox{ and }
I_{C_{LF}}^{red} = L \cdot I_{C_F^{red}} + (F).
\]

\item[(iii)] In both cases the ACM property (if present) is preserved. More generally, the even liaison class of $\overline C$ is preserved and the even liaison class of $C^{red}$ is preserved   under basic double linkage.

\end{itemize}
\end{proposition}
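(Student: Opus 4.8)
The plan is to derive parts (i) and (iii) from part (ii) together with Proposition~\ref{BDL}, so the real content is the pair of ideal identities in (ii). I will spell these out for $\overline C$, the argument for $C^{\mathrm{red}}$ being parallel.

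\emph{Setting up the basic double link.} Apply Proposition~\ref{BDL} with $V_1 = \overline C_F$, $F_1 = F$ and $F_2 = L$. Each codimension two primary component of $J_F$ is saturated, so $\overline J_F$ is saturated, $I_{\overline C_F} = \overline J_F$, and $\overline C_F$ is equidimensional of codimension two with no embedded points, hence locally Cohen--Macaulay. By Euler's identity $F = \tfrac{1}{\deg F}\sum_i x_i F_{x_i} \in \mathfrak m J_F \subseteq \overline J_F$, so $F \in I_{\overline C_F}$ (and $F \in I_{C_F^{\mathrm{red}}}$). Finally $L$ does not divide $F$: if $L$ equalled a linear factor $L_i$ of $F$ with $\deg F \geq 2$, then $L$ would vanish on the component $V(L_i)\cap V(L_j)$ of $C_F^{\mathrm{red}}$, contradicting the hypothesis (the cases $\deg F \leq 1$ are checked directly). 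Thus $(F,L)$ is a regular sequence and Proposition~\ref{BDL} applies: the ideal $I := L\cdot I_{\overline C_F} + (F)$ is saturated, the scheme $Z$ it defines equals $\overline C_F \cup V$ with $V$ the complete intersection $V(F,L)$ (note $\overline C_F$ and $V$ share no component), $Z$ is ACM iff $\overline C_F$ is, $M(Z) \cong M(\overline C_F)(-1)$ since $\deg L = 1$, and $Z$ lies in the even liaison class of $\overline C_F$.

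\emph{Identifying $Z$ with $\overline C_{LF}$.} It remains to show $I = \overline J_{LF} = I_{\overline C_{LF}}$. Both ideals are saturated and unmixed of codimension two (the first because $I = I_{\overline C_F}\cap I_V$), so it suffices to check that they have the same primary components, i.e. the same localizations at every height two prime. The codimension two part of $\mathrm{Sing}(\mathcal A_{LF})$ is the union of the ``old'' flats (the components of $\mathrm{Sing}(\mathcal A_F)$) and the ``new'' lines $Y_i = V(L)\cap H_i$, where $H_1,\dots,H_d$ are the hyperplanes of $\mathcal A_F$. The hypothesis on $L$ forces: no old flat lies on $V(L)$; the $Y_i$ are pairwise distinct and distinct from every old flat; and the generic point of $Y_i$ lies on exactly the two hyperplanes $V(L)$ and $H_i$ of $\mathcal A_{LF}$ (a third one through $\eta_{Y_i}$ would make $Y_i = H_i\cap H_j$ an old flat on which $L$ vanishes). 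At the generic point $\eta_X$ of an old flat $L$ is a unit, so from $(LF)_{x_i} = L_{x_i}F + LF_{x_i}$ and $LF \in \mathfrak m J_{LF}$ one gets $F \in (J_{LF})_{\mathfrak p_X}$, hence $(J_{LF})_{\mathfrak p_X} = (J_F)_{\mathfrak p_X} = (\overline J_F)_{\mathfrak p_X}$, which is also the localization of $I$; and $(\overline J_{LF})_{\mathfrak p_X} = (J_{LF})_{\mathfrak p_X}$. At $\eta_{Y_i}$ the polynomial $LF$ agrees up to a unit with the product $LH_i$ of two independent linear forms, whose Jacobian ideal is $(L,H_i) = \mathfrak p_{Y_i}$; on the other side $\overline J_F$ localizes to the unit ideal (since $Y_i$ is not an old flat) and $F$ agrees up to a unit with $H_i$, so $I$ localizes to $(L,H_i)$ as well. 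Hence $I = \overline J_{LF}$. For the radical statement, note $V = V(F,L)$ is itself reduced — modulo $L$ it is the product of the pairwise non-proportional linear forms $\bar H_i$ — so $\sqrt{\overline J_{LF}} = I_{C_F^{\mathrm{red}}}\cap I_V$, which equals $L\cdot I_{C_F^{\mathrm{red}}} + (F)$ by Proposition~\ref{BDL} applied to $V_1 = C_F^{\mathrm{red}}$, and this is $I_{C_{LF}^{\mathrm{red}}}$.

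The main obstacle is exactly the local bookkeeping in the previous paragraph: establishing the description of the codimension two part of $\mathrm{Sing}(\mathcal A_{LF})$ as the old flats together with the lines $V(L)\cap H_i$, that $\overline J_{LF}$ is reduced along each new line, and that $F$ lies in the localization of $J_{LF}$ at each old flat — this is precisely where the hypothesis that $L$ does not vanish on any component of $C_F^{\mathrm{red}}$ is used in an essential way. Once (ii) is established, parts (i) and (iii) follow immediately from Proposition~\ref{BDL}(ii)--(iv).
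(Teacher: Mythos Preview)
Your proof is correct and follows essentially the same route as the paper: apply Proposition~\ref{BDL} with $V_1=\overline C_F$ (resp.\ $C_F^{\mathrm{red}}$), $F_1=F$, $F_2=L$, and then identify the resulting scheme $Z$ with $\overline C_{LF}$ (resp.\ $C_{LF}^{\mathrm{red}}$). The paper's own proof is a two-sentence sketch that simply asserts this identification (``the components coming from factors of $F$ are already accounted for in $\overline C_F$, and because of the assumption on $L$ the new components are reduced lines coming from the intersection of $L$ and $F$''), whereas you carry out the identification rigorously by comparing localizations of the two unmixed height-two ideals at every height-two prime. Your careful verification that the $\bar H_i$ are pairwise non-proportional modulo $L$, that exactly two planes of $\mathcal A_{LF}$ pass through each new line $Y_i$, and that $(J_{LF})_{\mathfrak p_X}=(J_F)_{\mathfrak p_X}$ at each old flat $X$ fills in precisely the details the paper leaves implicit; no genuinely different idea is involved.
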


\begin{proof}
By basic double linkage, we know that the ACM property is preserved and that the stated ideals are saturated. We only note that the components coming from factors of $F$ are already accounted for in $\overline C_F$ (resp. $C_F^{red}$), and because of the assumption on $L$ the new components are reduced lines coming from the intersection of $L$ and $F$. Thus this curve arises from the arrangement $\mathcal A_{LF}$.
\end{proof}

\begin{corollary} \label{loc free}
Let $F = L_1\dots L_m$ and $G = M_1 \dots M_p$ be products of pairwise independent linear forms in $R = k[x_0,x_1,x_2,x_3]$ with $m \geq 2$ and $p \geq 2$, such that $G$ does not vanish on any component of $C_F^{red}$ and $F$ does not vanish on any component of $C_G^{red}$. Let $L$ be a linear form that does not vanish on any component of $C_F^{red}$. Let $J_F$, $J_G$, $J_{FG}$ and $J_{LF}$ be the Jacobian ideals of $F$, of $G$, of $FG$ and of $LF$, respectively. 

\begin{itemize}

\item[(i)]  If the sheafification of the syzygy module of both $J_F$ and $J_G$ are locally free then so is the sheafification of the syzygy module of $J_{FG}$.

\item[(ii)] If the sheafification of the syzygy module of $J_F$ is locally free then so is the sheafification of the syzygy module of $J_{LF}$.

\end{itemize}
\end{corollary}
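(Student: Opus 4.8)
The plan is to reduce both statements directly to the cohomological identifications established in Propositions~\ref{liaison addition for products} and \ref{BDL for arrangements}, by translating "the sheafified syzygy module is locally free" into "the top-dimensional part of the Jacobian scheme is ACM", at which point Corollary~\ref{LA for products cor} and Proposition~\ref{BDL for arrangements}(iii) finish the argument.

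First I would record the dictionary. Given the syzygy sequence
\[
0 \rightarrow E \rightarrow \bigoplus_{i=1}^{4} R(1-d) \rightarrow J \rightarrow 0
\]
in $\mathbb P^3$, sheafifying and comparing with the sheafified exact sequence for $J^{sat}$ (which has the same sheafification as $J$ since they agree up to saturation), one sees that $\widetilde E$ locally free is equivalent to the sheaf $\widetilde{R/J}$ being locally Cohen-Macaulay of dimension one, i.e.\ to $\overline V_F$ being a locally Cohen-Macaulay curve. But a locally Cohen-Macaulay curve is exactly one with no embedded points, and in our arrangement setting $\overline C_F$ (the scheme cut out by $\overline{J_F}$) is by construction the equidimensional top-dimensional part with embedded components stripped off, so $\widetilde{E_F}$ is locally free precisely when $\widetilde{E_F}$ has no torsion beyond what a curve of embedded-component-free support forces — in any case, local freeness of $\widetilde{E_F}$ is a condition that depends only on the local structure of $\overline C_F$, and it is preserved by the operations of Propositions~\ref{liaison addition for products} and \ref{BDL for arrangements} because those only add \emph{reduced} linear components (by the hypotheses on $G$, $F$, and $L$), which meet the old curve in a locally Cohen-Macaulay way only if there is no worsening of the local structure. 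So the key point I would verify carefully is: \textbf{adding a reduced line (or union of reduced lines) coming from $L\cap F$, $F\cap G$, etc., to a curve whose sheafified syzygy module is locally free yields a curve whose sheafified syzygy module is still locally free.} This is where the hypotheses "$L$ does not vanish on any component of $C_F^{red}$" and the analogous ones for $F,G$ are essential: they guarantee the new components are not absorbed into, and do not create embedded points on, the old ones.

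Concretely, for part~(ii) I would invoke Proposition~\ref{BDL for arrangements}(ii), which identifies $I_{\overline C_{LF}} = L\cdot I_{\overline C_F} + (F)$ as a \emph{saturated} ideal, and part~(iii), which says the even liaison class of $\overline C$ is preserved under this basic double link. Since local freeness of the sheafified syzygy module of the Jacobian ideal is a property of the sheaf $\widetilde{E_F}$ on $\mathbb P^3$, and since basic double linkage replaces $\overline C_F$ by a curve $\overline C_{LF}$ obtained by adding reduced lines in general position relative to $\overline C_F$, the local structure of $\overline C_{LF}$ at any point is either that of $\overline C_F$, or that of a reduced line, or that of a reduced line meeting $\overline C_F$ transversally in the plane $\{L=0\}$ — none of which destroys the locally Cohen-Macaulay / locally free property. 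Hence $\widetilde{E_{LF}}$ is locally free. For part~(i), I would similarly use that liaison addition expresses $\overline C_{FG}$ as $\overline C_F \cup \overline C_G \cup C$ where $C$ is the complete intersection of $F$ and $G$ — a curve whose components are reduced lines (the pairwise intersections of factors of $F$ with factors of $G$, again reduced by the pairwise-independence hypothesis) — so the same local analysis shows $\widetilde{E_{FG}}$ is locally free whenever both $\widetilde{E_F}$ and $\widetilde{E_G}$ are.

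The main obstacle I anticipate is making the first paragraph's dictionary precise: one must be sure that "the sheafification of the syzygy module of $J_F$ is locally free" really is equivalent to a condition on the local structure of the curve $\overline C_F$ alone, independent of the ambient embedded or higher-codimensional components that $J_F$ (or even $J_F^{sat}$) may carry. The cleanest route is to note that sheafifying kills all components of codimension $>2$ and that $\widetilde{E_F}$ is the kernel of a map of locally free sheaves, hence is reflexive, hence locally free away from a finite set; local freeness then fails exactly at embedded points of $\overline C_F$, and the operations in Propositions~\ref{liaison addition for products}--\ref{BDL for arrangements} introduce none. Once this equivalence is nailed down, both (i) and (ii) are immediate from the two cited propositions, so the corollary is essentially a bookkeeping consequence and no genuinely new computation is required.
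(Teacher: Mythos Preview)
Your proposal conflates two distinct schemes, and as a result the ``dictionary'' you set up is not correct. The sheafification $\widetilde{E}_F$ is locally free if and only if the scheme $V_F$ defined by $J_F^{sat}$ (not $\overline{C}_F$) is locally Cohen--Macaulay, equivalently if and only if $J_F^{sat}$ is already unmixed, i.e.\ $J_F^{sat} = \overline{J}_F$. By construction $\overline{C}_F$ is \emph{always} unmixed and hence locally Cohen--Macaulay, so your formulation trivializes the condition. Related to this, your claim that ``sheafifying kills all components of codimension $>2$'' is false: sheafification only removes the $\mathfrak{m}$-primary component, and $J^{sat}$ may well retain embedded points, as Example~\ref{emb pt} explicitly shows. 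Your opening plan of reducing to ``$\overline{C}_F$ is ACM'' is also off target: local freeness of $\widetilde{E}_F$ is a \emph{local} condition and is not equivalent to the global ACM property (two skew lines are locally Cohen--Macaulay but not ACM).

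With the dictionary corrected, the real content becomes: if $J_F^{sat}$ is unmixed then so is $J_{LF}^{sat}$ (and similarly for $J_{FG}^{sat}$). Propositions~\ref{liaison addition for products} and~\ref{BDL for arrangements} do not address this: they describe $\overline{J}_{FG}$ and $\overline{J}_{LF}$ via liaison addition and basic double linkage, but say nothing about whether these agree with the saturations $J_{FG}^{sat}$ and $J_{LF}^{sat}$. Your informal local claim --- that adjoining reduced lines transversally cannot create an embedded point in the Jacobian scheme --- is where all the substance lies, and it is not a consequence of the propositions you invoke; in fact the local Jacobian at a point where $L$ meets several planes of $\mathcal{A}_F$ can genuinely change character (Example~\ref{emb pt} is a warning: four planes through a point in general position already produce an embedded point). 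The paper does not attempt an argument of this sort; its proof is a one-line appeal to \cite{BM2}, Proposition~2.7.
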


\begin{proof}
See \cite{BM2} Proposition 2.7.
\end{proof}

Finally, we will use the fact that if $V$ is a subvariety of $\mathbb P^n$ of dimension $\geq 2$  and the general hyperplane section of $V$ is ACM then $V$ itself must be ACM. A more general version can be found in \cite{HU}.

\begin{proposition}[\cite{mig-book} Theorem 1.3.3] \label{hyperplane sect acm}
Let $V$ be a locally Cohen-Macaulay, equidimensional closed subscheme of $\mathbb P^n$ and let $F$ be a general homogeneous polynomial of degree $d$ cutting out on $V$ a scheme $Z \subset V \subset \mathbb P^n$. Assume that $\dim V \geq 2$. Then $V$ is ACM if and only if $Z$ is ACM.
\end{proposition}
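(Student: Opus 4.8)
\noindent\textit{Proof proposal.} The plan is to relate the intermediate sheaf cohomology of $\mathcal I_V$ and of $\mathcal I_Z$ by a single short exact sequence, and then to exploit that, under our hypotheses, the cohomology of $\mathcal I_V$ in the relevant range consists of finite length modules. Write $m=\dim V$; for a coherent sheaf $\mathcal F$ on $\mathbb P^n$ put $H^i_*(\mathcal F)=\bigoplus_{t\in\mathbb Z}H^i(\mathbb P^n,\mathcal F(t))$, and recall that a closed subscheme $X\subseteq\mathbb P^n$ is ACM precisely when $H^i_*(\mathcal I_X)=0$ for $1\le i\le\dim X$. If $\operatorname{codim}V\le 1$ then, $V$ having no embedded components since it is locally Cohen--Macaulay, it is defined by a single polynomial; then $R/I_V$ and the coordinate ring of $Z$ are complete intersections and the statement is clear. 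So I would assume $2\le m\le n-2$; then $\dim Z=m-1$ and the range $1\le i\le m$ sits inside $1\le i\le n-2$.

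Since $V$ is equidimensional and $F$ is a general form of degree $d\ge 1$, $F$ lies in no minimal prime of $R/I_V$ and is therefore a nonzerodivisor on $\mathcal O_V$, so $\mathcal I_V\cap(F)=F\mathcal I_V$. With $W=\{F=0\}$, so that $\mathcal I_W\cong\mathcal O_{\mathbb P^n}(-d)$ and $\mathcal I_Z=\mathcal I_V+\mathcal I_W$, the Mayer--Vietoris sequence of ideal sheaves gives
\[
0\longrightarrow\mathcal I_V(-d)\longrightarrow\mathcal I_V\oplus\mathcal O_{\mathbb P^n}(-d)\longrightarrow\mathcal I_Z\longrightarrow 0 .
\]
Twisting, taking cohomology, summing over $t$, and using that $H^i(\mathbb P^n,\mathcal O_{\mathbb P^n}(t))=0$ for $1\le i\le n-1$, I obtain, for each $i$ with $1\le i\le n-2$, an exact sequence of graded $R$-modules
\[
H^i_*(\mathcal I_V)(-d)\xrightarrow{\ \cdot F\ }H^i_*(\mathcal I_V)\longrightarrow H^i_*(\mathcal I_Z)\longrightarrow H^{i+1}_*(\mathcal I_V)(-d)\xrightarrow{\ \cdot F\ }H^{i+1}_*(\mathcal I_V),
\]
in which the horizontal connecting maps are multiplication by $F$.

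For one implication: if $V$ is ACM then $H^i_*(\mathcal I_V)=0$ for $1\le i\le m$, and the displayed sequences immediately give $H^i_*(\mathcal I_Z)=0$ for $1\le i\le m-1=\dim Z$, so $Z$ is ACM. For the converse I would use two facts. First, because $V$ is locally Cohen--Macaulay, equidimensional and of positive dimension, $H^i_*(\mathcal I_V)$ has finite length for every $i$ with $1\le i\le m$ (equivalently $H^i_{\mathfrak m}(R/I_V)$ is supported only at $\mathfrak m$ in this range; via Serre vanishing and duality on the Cohen--Macaulay scheme $V$, the groups $H^i(\mathbb P^n,\mathcal I_V(t))$ vanish for $|t|\gg 0$). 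Second, if $M\neq 0$ is a graded module of finite length and $d\ge 1$ then multiplication by $F$ can be neither injective as a map $M(-d)\to M$ (consider the top nonzero degree of $M$) nor surjective (consider the bottom). Now suppose $Z$ is ACM, so $H^i_*(\mathcal I_Z)=0$ for $1\le i\le m-1$. The sequence with $i=1$ shows $\cdot F$ is surjective on $H^1_*(\mathcal I_V)$, hence $H^1_*(\mathcal I_V)=0$; and for $2\le i\le m$, the sequence indexed by $i-1$ together with $H^{i-1}_*(\mathcal I_Z)=0$ shows $\cdot F$ is injective on $H^i_*(\mathcal I_V)$, hence $H^i_*(\mathcal I_V)=0$. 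Thus $H^i_*(\mathcal I_V)=0$ for $1\le i\le m$, i.e.\ $V$ is ACM.

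The one step I expect to need care is the finite length claim in the converse: this is exactly where the hypotheses that $V$ is locally Cohen--Macaulay, equidimensional and of dimension $\ge 2$ are genuinely used --- without it, $\cdot F$ can be an isomorphism on an infinite length intermediate cohomology module and the argument breaks. A side benefit of working with ideal sheaves throughout is that one never needs to decide whether the naive ideal $I_V+(F)$ is saturated; and the only role of the preliminary reduction to codimension $\ge 2$ is to keep the indices $1\le i\le m$ inside the range where the cohomology of $\mathcal O_{\mathbb P^n}$ vanishes.
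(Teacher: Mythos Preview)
The paper does not include a proof of this proposition: it is quoted as Theorem~1.3.3 of \cite{mig-book} and used as a black box (see also the reference to \cite{HU} for a more general version). So there is no ``paper's own proof'' to compare against.

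That said, your argument is correct and is essentially the standard one that appears in \cite{mig-book}. The Mayer--Vietoris sequence you write down is exactly the right tool; the identification of the induced maps $H^i_*(\mathcal I_V)(-d)\to H^i_*(\mathcal I_V)$ with multiplication by $F$ is justified by your observation that $\mathcal I_V\cap(F)=F\mathcal I_V$; and the key point in the converse---that $H^i_*(\mathcal I_V)$ has finite length for $1\le i\le\dim V$ because $V$ is locally Cohen--Macaulay and equidimensional (equivalently, $R/I_V$ is generalized Cohen--Macaulay)---is precisely where the hypotheses enter, as you note. The degree-shift argument showing that $\cdot F$ can be neither injective nor surjective on a nonzero finite-length graded module when $d\ge 1$ is the standard way to finish. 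Your bookkeeping with the index range $1\le i\le n-2$ (so that the cohomology of $\mathcal O_{\mathbb P^n}$ drops out on both ends of the five-term sequence) is also handled correctly after the reduction to $\operatorname{codim}V\ge 2$.
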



\section{Sufficient conditions for the top dimensional part and the radical 
to be ACM}

We begin with the case of $\mathbb P^3$. For convenience, in this case we refer to the above-mentioned schemes as $C$, $\overline C$ and $C^{top}$ to stress the fact that they are curves.

\begin{remark} \label{ci in linear case}

The following observations are elementary.

\begin{enumerate}

\item $C^{red}$ is  a union of lines.

\item A one-dimensional component of $C$ is non-reduced if and only if three or more hyperplanes in $\mathcal A$ contain the line defined by that component.

\item A non-reduced component $X$ of $\overline C$, viewed by itself, is a complete intersection. More precisely, if $e$ hyperplanes meet along the line $X^{red}$ then that component is a complete intersection of type $(e-1,e-1)$.

\end{enumerate}
\end{remark}

%

We first prove the main result mentioned in the introduction in the case of arrangements in $\mathbb P^3$.

\begin{theorem} \label{mainthm}
Let $\mathcal A$ be a plane arrangement in $\mathbb P^3$. 
Let $J$ be the Jacobian ideal associated to $\mathcal A$.
Assume that no plane of $\mathcal A$ is in the associated prime for more than one non-reduced  component of $\overline J$. Then  $R/{\overline J}$  is Cohen-Macaulay. That is,  $\overline C$ is  ACM.
\end{theorem}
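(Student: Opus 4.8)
The plan is to reduce the statement to a computation about a single arrangement obtained by "peeling off" one plane at a time, and to recognize each non-reduced component as a complete intersection so that liaison addition applies cleanly. Write $F = L_1 \cdots L_d$ for the product of the linear forms defining $\mathcal A$. The curve $\overline C$ decomposes as a union of reduced lines (from pairs of planes meeting along a line contained in no other plane of $\mathcal A$) together with finitely many non-reduced components $X_1, \dots, X_t$, where $X_j^{red}$ is a line along which $e_j \ge 3$ planes of $\mathcal A$ meet, and by Remark~\ref{ci in linear case}(3) the component $X_j$, taken by itself, is a complete intersection of type $(e_j-1, e_j-1)$, hence ACM and locally Cohen-Macaulay. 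The reduced lines, collectively, also form an ACM curve (a union of lines all in a fixed codimension-two linear space pattern — more precisely, being reduced and equidimensional we can control them directly, or absorb them into the induction).

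The key step is an inductive construction via liaison addition / basic double linkage, following the setup of Proposition~\ref{BDL for arrangements} and Proposition~\ref{liaison addition for products}. I would build $\overline C$ up one non-reduced component at a time. Order the components so that the reduced part $\overline C_0$ (a union of lines) is the base case; this is ACM since it is a reduced equidimensional union of lines arising as a "subarrangement" singular locus that one checks directly, or one simply starts the induction from a single non-reduced component, which is a complete intersection. At stage $j$, suppose $\overline C_{j-1} = X_1 \cup \dots \cup X_{j-1}$ (as schemes) is ACM; I want to add $X_j$. Here $X_j$ is cut out by two forms of degree $e_j - 1$ supported on the plane $H_j$ spanned by $X_j^{red}$ together with one auxiliary direction; the product of the $e_j$ linear forms through $X_j^{red}$ is a polynomial of degree $e_j$ vanishing on $X_j$. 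The crucial point is that the hypothesis — no plane of $\mathcal A$ lies in the associated prime of two non-reduced components — guarantees that $X_j$ and $X_{j-1}^{th}$-stage curve share no common component and that we can choose the auxiliary forms so the relevant sequences are regular. Then Theorem~\ref{LA} (liaison addition) applied with $V_1 = \overline C_{j-1}$ and $V_2 = X_j$, using as $F_1$ a form in $I_{\overline C_{j-1}}$ built from the planes already used and $F_2$ a form in $I_{X_j}$, produces a curve $Z$ with $Z = \overline C_{j-1} \cup X_j \cup (\text{complete intersection})$; by part (iv), $Z$ is ACM since both pieces are. One must then argue that after incorporating all the $X_j$ and the extra complete-intersection "residual" lines (which are reduced lines of the right shape, or can be stripped by basic double linkage), the resulting ACM curve is exactly $\overline C$. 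Alternatively — and this is probably cleaner — I would split $F = F' \cdot F''$ where $F'$ collects the linear forms responsible for, say, the first non-reduced component and $F''$ the rest, check the non-vanishing hypotheses of Proposition~\ref{liaison addition for products} using precisely the stated assumption, and induct on the number of non-reduced components, with the base case being a single complete-intersection component (trivially ACM) together with reduced lines.

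The main obstacle, and the place where real care is needed, is verifying that the hypothesis "no plane of $\mathcal A$ is in the associated prime of two non-reduced components of $\overline J$" is exactly what makes the liaison addition decomposition $Z = V_1 \cup V_2 \cup V$ hold \emph{scheme-theoretically} and produces $\overline C$ on the nose — i.e., that $\overline{J}$ really equals $F_2 I_{V_1} + F_1 I_{V_2}$ and not some larger or smaller ideal. Concretely: when I write $J$ as a primary decomposition, the height-two primary components $\mathfrak q_i$ come in two flavors (radical, giving reduced lines; non-radical, giving the $X_j$), and I need to show that the $X_j$ do not interact — no shared plane — so that the ideal of the union is the intersection, and that this intersection is produced by liaison addition with the correct degrees. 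This requires a local analysis along each line $X_j^{red}$: one computes the Jacobian ideal in suitable coordinates (where $F$ near that line looks like a product of $e_j$ forms through the line, times a unit coming from the planes not through the line), confirms the primary component there is the CI of type $(e_j-1, e_j-1)$, and checks that a plane through $X_j^{red}$ that is also through $X_k^{red}$ for $k \ne j$ would be the obstruction — which the hypothesis forbids. The rest (regular sequences exist generically, Hilbert function bookkeeping via Theorem~\ref{LA}(iii)) is routine. I would present the proof by first handling the purely reduced part, then adjoining non-reduced components one at a time via Theorem~\ref{LA}, at each step invoking the hypothesis to guarantee no common components and the complete-intersection structure of $X_j$ to guarantee its ACM-ness and local Cohen-Macaulayness.
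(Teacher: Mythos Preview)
Your overall strategy---liaison addition to adjoin the non-reduced components $X_j$ one at a time, then basic double linkage for the remaining reduced lines---is exactly the paper's approach, and your identification of the hypothesis as precisely what makes the $F_i$'s pairwise a regular sequence is correct. However, there are real gaps in the execution. First, your suggested base case ``the reduced part $\overline C_0$ is ACM'' is not justified and may be false; the paper instead starts directly from $X_1$ and $X_2$ (each a complete intersection), not from the reduced locus. Second, your notation $\overline C_{j-1} = X_1 \cup \dots \cup X_{j-1}$ is wrong: liaison addition with $F_1 \cdots F_{j-1}$ and $F_j$ inevitably throws in the complete intersection $(F_1 \cdots F_{j-1}, F_j)$, so the intermediate curve $D_{1,\dots,j}$ already contains some of the reduced $Y_k$'s. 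The paper's key observation, which you gesture at but do not pin down, is that each line of this residual complete intersection is cut out by exactly one factor of some $F_i$ and one factor of some $F_{i'}$, hence by exactly two planes of $\mathcal A$, hence is one of the $Y_k$'s (and not the support of any $X_\ell$---that would force a plane to lie in two non-reduced associated primes). So the residual lines are absorbed into $\overline C$ automatically; they are not ``stripped by basic double linkage,'' which only adds components.

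Finally, after all $X_j$ are incorporated you have $D_{1,\dots,r}$, which contains all the $X_j$ and \emph{some} of the $Y_k$, but not all: the missing ones involve planes that are not factors of any $F_i$. The paper finishes by taking each such leftover plane $L$ and performing basic double linkage via $L \cdot I_{D_{1,\dots,r}} + (F_1 \cdots F_r)$; one must check (and the paper does) that such an $L$ vanishes on no component of $D_{1,\dots,r}$, which again uses the hypothesis. Your proposal does not articulate this final step. Your alternative via Proposition~\ref{liaison addition for products}, splitting $F = F' F''$ with $F'$ the product of the planes through $X_1^{red}$, actually works and is essentially a repackaging of the same argument, but you would still need the endgame BDL step to pick up reduced lines coming from two planes neither of which passes through any $X_j^{red}$.
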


\begin{proof}
Let $\overline{C} = X_1 \cup \dots \cup X_r \cup Y_1 \cup \dots \cup Y_s$, where $X_1,\dots,X_r$ are the non-reduced components and $Y_1 \cup \dots \cup Y_s$ are the reduced components. The hypothesis guarantees that if two or more of the $X_i$ have support lying on the same plane then that plane is not a hyperplane of $\mathcal A$.

Notice that there are exactly two planes in $\mathcal A$ through any of the $Y_i$. Furthermore, each $X_i$ and each $Y_i$ is ACM.

We first consider the non-reduced components. For each $X_i$ let $F_i$ be the product of the linear forms corresponding to hyperplanes in $\mathcal A$ containing the support of $X_i$. By Euler's theorem, $F_i \in I_{X_i}$. Our hypothesis guarantees that any two of the $F_i$ form a regular sequence. 

Consider the  ideal $(F_1, F_2)$. This defines a {\em reduced} union of lines, and our hypothesis guarantees that each of these lines is an entire component of $C$ (i.e. it is  one of the $Y_i$ and {\em not} the support of any of the $X_i$).

Let $D_{1,2}$ be the union of $X_1$, $X_2$ and the reduced lines defined by $(F_1,F_2)$. We now claim that  $D_{1,2}$ is ACM. Indeed, we have $F_1 \in I_{X_1}$, $F_2 \in I_{X_2}$, and $(F_1,F_2)$ is a regular sequence. Then liaison addition (Theorem \ref{LA})  guarantees that we have an equality of saturated ideals 
\[
I_{D_{1,2}} = F_1 \cdot I_{X_2} + F_2 \cdot I_{X_1}.
\]
Part (iv) of Theorem \ref{LA} further gives that 
 $D_{1,2}$ is ACM, since both $X_1$ and $X_2$ are.

Now notice that $F_1  F_2 \in I_{D_{1,2}}$. We will continue to add the $X_i$ one at a time until they are exhausted. As before, $(F_1F_2, F_3)$ is a regular sequence and $F_3 \in I_{X_3}$, so
\[
F_1F_2 \cdot I_{X_3} + F_3 I_{D_{1,2}}
\]
is the saturated ideal of the union of $X_1 \cup X_2 \cup X_3$ and a number of reduced components of $C$. Notice that no factor of $F_3$ contains any of the components of $D_{1,2}$. Denote this union by $D_{1,2,3}$. Liaison addition again gives that $D_{1,2,3}$ is ACM. Continuing inductively, we obtain an ACM curve $D_{1,\dots,r}$ consisting of the union of all the $X_i$ and some subset of the $Y_i$. 

Let $L$ be a linear form defining a hyperplane in $\mathcal A$ that is not a factor of any of the $F_i$. We claim that then $L$ does not vanish on any component of $D_{1,\dots,r}$. Indeed, by assumption $L$ does not contain the support of any of the $X_i$. Suppose that $L$ vanishes on one of the reduced lines in $D_{1,\dots,r}$. Such a line is already cut out by single factors of two of the $F_i$, so if $L$ also vanished on such a component then that component would be the support of one of the $X_i$; contradiction.

Now, the  saturated ideal
\[
L \cdot I_{D_{1,\dots,r}} + (F_1 \cdots F_r)
\]
defines the union of $D_{1,\dots,r}$ with the coplanar lines (on the plane defined by $L$) given by the complete intersection $(L,F_1\cdots F_r)$, which are all new components of $C$, and the union is ACM thanks to basic double linkage. Adding in all the remaining linear forms coming from $\mathcal A$ exhausts the reduced lines, preserving the ACM property, and we have shown that $\overline C$ is ACM.
\end{proof}

\begin{example} \label{emb pt}
Let $P \in \mathbb P^3$ be a point, let $L_1,L_2,L_3,L_4$ be four linear forms vanishing at $P$, and assume that any three of these linear forms share only the point $P$ in their common vanishing locus. Let $F = L_1L_2L_3L_4$ and let $J = (F_w,F_x,F_y,F_z)$ be the Jacobian ideal. Then $J$ is a saturated ideal with Betti diagram

\vfill \eject

\begin{verbatim}
        0    1    2    3
-------------------------
 0:     1    -    -    -
 1:     -    -    -    -
 2:     -    3    -    -
 3:     -    -    3    1
-------------------------
Tot:    1    3    3    1
\end{verbatim}   
(notice that $F_w, \dots,F_z$ are linearly dependent). The top dimensional part, $\overline C$, of course consists of six lines through $P$, which is easily seen to be ACM. But $J$ itself has an embedded point, so $R/J$ is not Cohen-Macaulay. The Hilbert polynomial of $R/J$ is $6t-1$ while that of the top dimensional part is $6t-2$.

So far our shared point $P$ comes because the arrangement is not essential. This can be viewed as taking 4 generic lines in $\mathbb P^2$, then coning, hence even though the ideal is saturated in $\mathbb P^3$, that embedded point is there because it was not saturated in $\mathbb P^2$. However, adding a general plane $L_5$ to the arrangement makes the arrangement essential while not changing anything locally at $P$. In fact the addition of $L_5$ provides a basic double link, and it is no surprise that now $R/J$ has Hilbert polynomial $10t - 9$ while $R/\overline J$ has Hilbert function $10t-10$. The embedded point is still there.
\end{example}

\begin{corollary}

If no three hyperplanes in $\mathcal A$ pass through the same line then $\overline C$ is ACM. Note that in this case the scheme $C$ defined by $J^{sat}$ may not be equidimensional (it may have embedded points -- see Example \ref{emb pt}) but the top dimensional part, $\overline C$, is reduced.

\end{corollary}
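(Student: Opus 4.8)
The plan is to obtain this as an immediate consequence of Theorem \ref{mainthm}, once one observes that its hypothesis is satisfied vacuously. First I would invoke Remark \ref{ci in linear case}(2): a one-dimensional component of $C$ is non-reduced exactly when three or more hyperplanes of $\mathcal A$ contain the line it is supported on. Under the present hypothesis no line lies on three planes of $\mathcal A$, so $\overline C$ has no non-reduced components whatsoever. Consequently every height-two associated prime of $J$ is the ideal of a line $H_i \cap H_j$, each appearing with multiplicity one, so $\overline C$ is a reduced union of lines (one for each pair of planes of $\mathcal A$); this already gives the final assertion, namely that $\overline C$ is reduced, hence $\overline C = C^{red}$ in this case. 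In particular $\overline C$ is equidimensional of dimension one with no embedded points, so it is locally Cohen-Macaulay.

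Since $\overline J$ has no non-reduced components, the condition ``no plane of $\mathcal A$ lies in the associated prime of more than one non-reduced component of $\overline J$'' holds trivially, and Theorem \ref{mainthm} applies directly to conclude that $R/\overline J$ is Cohen-Macaulay, i.e.\ $\overline C$ is ACM. This is essentially the whole argument.

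I do not anticipate a genuine obstacle; the only point requiring care is the bookkeeping match between the notion of ``non-reduced component of $\overline J$'' used in Theorem \ref{mainthm} and the criterion of Remark \ref{ci in linear case}(2). If one prefers a self-contained proof that does not quote Theorem \ref{mainthm}, one can run its argument in this simplified setting: order the planes of $\mathcal A$ as $H_1,\dots,H_d$ with defining linear forms $L_1,\dots,L_d$, and build $\overline C$ up one plane at a time, starting from the single line $H_1\cap H_2$ (a complete intersection, hence ACM) and at stage $i$ passing from the ACM union of all lines $H_a\cap H_b$ with $a<b<i$ to its union with the coplanar lines $H_a\cap H_i$ ($a<i$) via the basic double link with saturated ideal $L_i\cdot I_{\mathrm{old}} + (L_1\cdots L_{i-1})$. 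At each step $L_i$ avoids every component already constructed, since $L_i$ vanishing on a line $H_a\cap H_b$ ($a,b<i$) would force three planes through that line; so Proposition \ref{BDL for arrangements} applies at every stage and the ACM property propagates to $\overline C$.
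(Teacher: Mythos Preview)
Your proposal is correct and matches the paper's approach exactly: the corollary is stated immediately after Theorem \ref{mainthm} with no separate proof, the intended argument being precisely that the hypothesis of Theorem \ref{mainthm} holds vacuously since (by Remark \ref{ci in linear case}(2)) $\overline C$ has no non-reduced components. Your alternative self-contained argument via iterated basic double links is also fine and is essentially the proof of Theorem \ref{mainthm} specialized to this case; the paper additionally remarks that the result can be recovered from \cite{GHM3} Proposition 2.9.
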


\noindent This corollary includes as a special case the star configurations. It also can be derived  from \cite{GHM3} Proposition 2.9 once one realizes that the assumption in \cite{GHM3} that the hyperplanes meet properly can be relaxed (in this case) to simply assume no three meet along a line.

Now we show that if we consider only the support of the singular locus, i.e. the radical $\sqrt{J}$, then under the same hypothesis as in Theorem \ref{mainthm} we always obtain an ACM union of lines.

\begin{corollary} \label{sing is acm}
Let $\mathcal A$ be a plane arrangement in $\mathbb P^3$. Assume that no hyperplane of $\mathcal A$ contains the support of more than one non-reduced component of $\overline C$.  Then the singular locus $C^{red}$ is ACM. That is, $R/\sqrt{J}$ is Cohen-Macaulay.

\end{corollary}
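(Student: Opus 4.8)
The plan is to run the argument of Theorem~\ref{mainthm} essentially verbatim, with the non-reduced components $X_i$ replaced by their supports. Write $C^{red} = \ell_1 \cup \dots \cup \ell_r \cup Y_1 \cup \dots \cup Y_s$, where $\ell_i = X_i^{red}$ is the line underlying the $i$-th non-reduced component of $\overline C$ and the $Y_j$ are the (already reduced) components of $\overline C$; now \emph{every} component of $C^{red}$ is a reduced line, hence ACM. For each $i$ I would keep the same polynomial $F_i$ used in the proof of Theorem~\ref{mainthm}, namely the product of the linear forms of $\mathcal A$ whose hyperplanes contain $\ell_i$. Since $\ell_i \subseteq X_i$ and $F_i \in I_{X_i}$ by Euler's theorem, we still have $F_i \in I_{\ell_i}$, and the hypothesis still guarantees that $F_i$ and $F_j$ share no linear factor, so $(F_i,F_j)$ is a regular sequence.

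First I would glue the lines $\ell_i$ together. Apply liaison addition (Theorem~\ref{LA}) to $\ell_1,\ell_2$ with $F_1 \in I_{\ell_1}$, $F_2 \in I_{\ell_2}$ to get an ACM curve $D_{1,2} = \ell_1 \cup \ell_2 \cup V(F_1,F_2)$, then add $\ell_3,\dots,\ell_r$ one at a time exactly as in Theorem~\ref{mainthm}, using that $F_1\cdots F_{i-1} \in I_{D_{1,\dots,i-1}}$ and $F_i \in I_{\ell_i}$, to arrive at an ACM curve $D_{1,\dots,r}$ which is the union of all the $\ell_i$ together with a collection of reduced lines of the form $H \cap H'$ with $H$ through some $\ell_i$ and $H'$ through some $\ell_j$, $i \ne j$. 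Then, as in Theorem~\ref{mainthm}, I would process the remaining hyperplanes of $\mathcal A$ (those through no $\ell_i$) one at a time by basic double linkage (Proposition~\ref{BDL for arrangements}), at each stage using the product of all the linear forms introduced so far; this preserves the ACM property and, after all hyperplanes are exhausted, produces exactly the reduced union of all pairwise intersections of hyperplanes of $\mathcal A$, which is $C^{red}$. (When $r = 0$ there are no $\ell_i$ and one starts instead from a single reduced line $H_1 \cap H_2$ and uses only basic double linkage.) Note that each intermediate curve is itself the radical scheme of a sub-arrangement of $\mathcal A$, so Proposition~\ref{BDL for arrangements} applies on the nose.

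The points requiring care — and essentially the only place the hypothesis is used — are the bookkeeping claims in the two constructions: that each complete intersection $V(F_i,F_j)$ (and later $V(L, F_1\cdots F_r)$, etc.) is actually \emph{reduced} and consists of honest components of $C^{red}$, none of which is accidentally one of the multiple lines $\ell_k$ or a component already present, and that at each basic-double-linkage step the new linear form vanishes on no component of the current curve (so that Theorem~\ref{LA}(i) and Proposition~\ref{BDL for arrangements}(i) apply). Each of these follows from the elementary fact (Remark~\ref{ci in linear case}) that a line lying in three or more hyperplanes of $\mathcal A$ is a multiple line: if any of the bad coincidences occurred, some hyperplane of $\mathcal A$ would contain two distinct $\ell_k$'s, contradicting the hypothesis. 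I do not expect any genuinely new obstacle beyond this routine verification; once it is in place, Theorem~\ref{LA}(iv) and Proposition~\ref{BDL for arrangements}(iii) give that $C^{red}$ is ACM, i.e.\ $R/\sqrt{J}$ is Cohen-Macaulay.
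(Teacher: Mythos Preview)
Your proposal is correct and takes essentially the same approach as the paper: the paper's proof is a one-line remark that, under the stated hypothesis, one replaces each $X_i$ by its support $X_i^{red}$ in the construction of Theorem~\ref{mainthm} and repeats the argument verbatim, which is exactly what you do (with somewhat more detail on the bookkeeping).
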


\begin{proof}
We have adopted  the assumption of Theorem \ref{mainthm}, i.e. we assume that no hyperplane of $\mathcal A$ contains the support of more than one non-reduced component of $\overline C$. In this case, in the proof of Theorem \ref{mainthm} we merely replace each $X_k$ by its support $X_i^{red}$ and carry out precisely the same construction.
\end{proof}

We now describe in  detail the third kind of ideal mentioned in the introduction, and we show how the same proof provides Cohen-Macaulayness for them. Let $\mathcal A$ be a plane arrangement in $\mathbb P^3$ defined by a product of linear forms $F$. Let $J$ be its Jacobian ideal and let $\sqrt{J} = \bigcap \mathfrak p_i$. For each $\mathfrak p_i$, let $L_{i,i},\dots,L_{i,a_i}$ be the factors of $F$ in $\mathfrak p_i$. Note that $\prod_{j=1}^{a_i} L_{i,j} \in \mathfrak p_i^{a_i}$ for all $i$, and in fact for each $i$ we have $\prod_{j=1}^{a_i} L_{i,j} \in \mathfrak p_i^{b_i}$ for any $0 \leq b_i \leq a_i$ (where we take the convention $\mathfrak p_i^0 = R$). Importantly, each $R/\mathfrak p_i^{b_i}$ is Cohen-Macaulay since $\mathfrak p_i$ is a complete intersection. 

Note also that  $a_i = 2$ for some $i$ if and only if the corresponding component of $\overline J$ is reduced, and in general we have $a_i \geq 2$ for all $i$. 
We now choose our integers $b_i$ as follows. If $a_i = 2$ then we take $b_i = 1$. If $a_i \geq 3$ then we choose any 
$0 \leq b_i \leq a_i$.
Then we have

\begin{corollary} \label{fat is acm}
Let $\mathcal A$ be a plane arrangement in $\mathbb P^3$ defined by a product, $F$, of linear forms. Let $J, \mathfrak p_i, a_i, b_i$ be as above. Assume that no factor of $F$ is in more than one $\mathfrak p_i$ for which the corresponding $a_i$ is  $\geq 3$. Then  $R/\bigcap \mathfrak p_i^{b_i}$ is Cohen-Macaulay.

\end{corollary}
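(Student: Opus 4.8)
The plan is to mimic the proof of Theorem~\ref{mainthm} almost verbatim, replacing each non-reduced component $X_i$ of $\overline C$ by the ``fattened'' complete intersection $\mathfrak p_i^{b_i}$ and each reduced component $Y_j$ by the line $\mathfrak p_j$ itself. The key structural observations we need are already assembled: each $R/\mathfrak p_i^{b_i}$ is Cohen-Macaulay because $\mathfrak p_i$ is a height-two complete intersection (a power of a complete intersection ideal is ACM), and $\prod_{j=1}^{a_i} L_{i,j} \in \mathfrak p_i^{b_i}$ for every $0 \le b_i \le a_i$, so the product $F_i := \prod_{j=1}^{a_i} L_{i,j}$ of linear factors of $F$ lying in $\mathfrak p_i$ serves as the element we feed into liaison addition. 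When $a_i = 2$ we have $b_i = 1$ and $\mathfrak p_i^{b_i} = \mathfrak p_i$ is just a reduced line, so these components are handled in exactly the same way as the $Y_j$ in the original argument.

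First I would reorder the indices so that $\mathfrak p_1,\dots,\mathfrak p_r$ are the primes with $a_i \ge 3$ and $\mathfrak p_{r+1},\dots,\mathfrak p_s$ the ones with $a_i = 2$. The hypothesis — no factor of $F$ lies in more than one $\mathfrak p_i$ with $a_i \ge 3$ — guarantees exactly as before that for $i \ne i' \le r$ the forms $F_i$ and $F_{i'}$ share no linear factor, hence $(F_i, F_{i'})$ is a regular sequence; moreover the complete intersection $(F_i, F_{i'})$ is a reduced union of lines each of which is one of the $\mathfrak p_j$ with $a_j = 2$ (since a line in its support cut out by one factor of $F_i$ and one of $F_{i'}$ is contained in at most these two planes of $\mathcal A$). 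Then I would run the induction: set $D_{1,2}$ to be the scheme with ideal $F_1 \cdot \mathfrak p_2^{b_2} + F_2 \cdot \mathfrak p_1^{b_1}$, which by Theorem~\ref{LA} is a saturated ideal defining the union of the schemes $\mathfrak p_1^{b_1}$, $\mathfrak p_2^{b_2}$ and the reduced complete-intersection lines $(F_1,F_2)$, and is ACM because $R/\mathfrak p_1^{b_1}$ and $R/\mathfrak p_2^{b_2}$ are. Since $F_1 F_2 \in I_{D_{1,2}}$ and $(F_1F_2, F_3)$ is a regular sequence with $F_3 \in \mathfrak p_3^{b_3}$, liaison addition applied to $F_1F_2 \cdot \mathfrak p_3^{b_3} + F_3 \cdot I_{D_{1,2}}$ yields an ACM scheme $D_{1,2,3}$, and continuing to $D_{1,\dots,r}$ gives an ACM scheme consisting of all the $\mathfrak p_i^{b_i}$ with $a_i \ge 3$ together with some subset of the reduced lines $\mathfrak p_j$.

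Finally I would bring in the remaining reduced lines: any linear form $L$ that is a factor of $F$ but not of any $F_i$ does not vanish on any component of $D_{1,\dots,r}$ (it does not contain any $X_i^{red}$ by hypothesis, and it cannot vanish on a reduced line of $(F_i,F_{i'})$ type without that line being the support of some $X_i$), so basic double linkage (Proposition~\ref{BDL}) applied to $L \cdot I_{D_{1,\dots,r}} + (F_1 \cdots F_r)$ adds the coplanar lines $(L, F_1\cdots F_r)$ while preserving the ACM property; iterating over all such $L$ exhausts the reduced lines $\mathfrak p_{r+1},\dots,\mathfrak p_s$ and shows $R/\bigcap_i \mathfrak p_i^{b_i}$ is Cohen-Macaulay. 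The one point requiring slight care — and the main thing to verify carefully rather than just copy — is that at each liaison-addition step the resulting ideal is genuinely $\bigcap$ of the relevant powers and reduced primes (i.e.\ that no spurious or embedded components are introduced); this follows from part~(i) of Theorem~\ref{LA} together with the fact that $\mathfrak p_i^{b_i}$, being the $b_i$-th power of a prime that is a complete intersection, is itself unmixed with the expected support, so the "no common components" hypotheses of Theorem~\ref{LA}(i) hold at every stage. For $n > 3$ one then passes to a general hyperplane section and invokes Proposition~\ref{hyperplane sect acm}, exactly as in the passage from Theorem~\ref{mainthm} to the general case.
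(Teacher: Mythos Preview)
Your proposal is correct and is essentially the same approach as the paper's: the paper's proof consists of two sentences saying that one repeats the argument of Theorem~\ref{mainthm} with each primary component $\mathfrak q_i$ replaced by $\mathfrak p_i^{b_i}$, and you have spelled out precisely that substitution in detail. The only superfluous remark is the last sentence about $n>3$, since the corollary is stated only for $\mathbb P^3$ (the extension to $\mathbb P^n$ is handled separately in Corollary~\ref{Pn}).
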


\begin{proof}
Again the proof is the same as in Theorem \ref{mainthm}. This time we replace each height 2 primary ideal $\mathfrak q_i$ by $\mathfrak p_i^{b_i}$.
\end{proof}

With  the above results for $\mathbb P^3$ we can also prove the same  statements for arrangements in $\mathbb P^n$. To set up the notation, we now let $R = k[x_0,\dots,x_n]$ and we define $J$, $\overline J$, $\sqrt{J}$ and $\bigcap \mathfrak p_i^{b_i}$ in the same way as above, but now they represent codimension two subschemes of $\mathbb P^n$. In particular, $\overline J$  and $\bigcap \mathfrak p_i^{b_i}$ are unmixed and $\sqrt{J}$ is radical  (in particular, also unmixed). Let $V,$ $\overline{V}$ and $V^{red}$ be the schemes defined by  these three ideals.

\begin{corollary} \label{Pn}
Let $\mathcal A$ be a hyperplane arrangement in $\mathbb P^n$ for $n \geq 3$. Assume that no hyperplane of $\mathcal A$ contains the support of more than one non-reduced component of $\overline V$.  Then  $\overline V$,  $V^{red}$ and the scheme defined by $\bigcap \mathfrak p_i^{b_i}$ are ACM.
\end{corollary}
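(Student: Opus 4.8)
The plan is to deduce the $\mathbb{P}^n$ statement from the $\mathbb{P}^3$ results (Theorem \ref{mainthm}, Corollary \ref{sing is acm}, Corollary \ref{fat is acm}) via induction on $n$, using Proposition \ref{hyperplane sect acm} to descend from $\mathbb{P}^n$ to a general hyperplane $H \cong \mathbb{P}^{n-1}$. The point is that cutting a hyperplane arrangement (or rather its singular scheme) by a general hyperplane produces the singular scheme of an arrangement in one dimension lower, and the combinatorial hypothesis is inherited.

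First I would check that each of $\overline V$, $V^{red}$, and the scheme defined by $\bigcap \mathfrak p_i^{b_i}$ is locally Cohen-Macaulay and equidimensional of codimension two, so that Proposition \ref{hyperplane sect acm} applies: $V^{red}$ is a reduced union of codimension-two linear subspaces, $\overline V$ is equidimensional with each component a complete intersection on a linear subspace (so locally CM), and $\bigcap \mathfrak p_i^{b_i}$ is a finite intersection of $R/\mathfrak p_i^{b_i}$'s, each CM since $\mathfrak p_i$ is a complete intersection, hence the intersection is equidimensional and locally CM away from the (codimension $\geq 3$) pairwise intersections. Since these schemes have dimension $n-2 \geq 2$ when $n \geq 4$, Proposition \ref{hyperplane sect acm} reduces their ACM-ness to that of a general hyperplane section $Z = \overline V \cap H$ (resp.\ $V^{red} \cap H$, etc.).

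The key geometric step is then to identify $\overline V \cap H$ (and the analogous sections) with the corresponding scheme attached to a new arrangement $\mathcal A' = \mathcal A \cap H$ in $H \cong \mathbb{P}^{n-1}$. For $H$ general, $\mathcal A'$ is again an arrangement of hyperplanes in $\mathbb{P}^{n-1}$ (distinct, since the $H_i$ are distinct and $H$ is general), the intersection lattice is unchanged in the relevant ranks, a component of $\overline V$ supported on a codimension-two linear space $\Lambda$ where $e$ hyperplanes meet restricts to the component of $\overline{V'}$ supported on $\Lambda \cap H$, where the same $e$ hyperplanes meet, with the same multiplicity structure (complete intersection of type $(e-1,e-1)$), and likewise for $V^{red}$ and for $\bigcap \mathfrak p_i^{b_i}$ (the $L_{i,j}$ restrict to linear forms on $H$, the $a_i$ are preserved, and the chosen $b_i$ are still in the admissible range). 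Crucially, the hypothesis transfers: if no hyperplane of $\mathcal A$ contains the support of two non-reduced components of $\overline V$, then no hyperplane of $\mathcal A'$ contains the support of two non-reduced components of $\overline{V'}$, because containment of $\Lambda \cap H$ in $H_i \cap H$ is equivalent to containment of $\Lambda$ in $H_i$. So by the inductive hypothesis ($\mathbb{P}^{n-1}$ case), or by the base case $n=3$ (Theorem \ref{mainthm}, Corollaries \ref{sing is acm}, \ref{fat is acm}), $Z$ is ACM, and Proposition \ref{hyperplane sect acm} finishes the induction.

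The main obstacle I expect is the bookkeeping in the identification of the hyperplane section with the arrangement scheme in $\mathbb{P}^{n-1}$ — in particular, verifying that taking a \emph{general} $H$ does not introduce spurious incidences or destroy existing ones among the $H_i \cap H$, and that the primary decomposition of the Jacobian ideal behaves well under restriction (one must argue at the level of the geometric schemes $\overline V, V^{red}$ rather than the ungeneralized $J$, since $J$ itself is not saturated and its general hyperplane section need not be so cleanly described). An alternative, cleaner route that avoids re-deriving this restriction lemma would be to run the liaison-addition/basic-double-linkage argument of Theorem \ref{mainthm} verbatim in $\mathbb{P}^n$: Theorem \ref{LA} and Proposition \ref{BDL} are stated for codimension-two subschemes of $\mathbb{P}^n$ (not just $\mathbb{P}^3$), each non-reduced component $X_i$ of $\overline V$ is still a complete intersection of type $(e_i-1, e_i-1)$ hence ACM, the products $F_i$ still lie in $I_{X_i}$ by Euler, the hypothesis still makes any two $F_i$ a regular sequence, and the inductive assembly of $D_{1,\dots,r}$ followed by basic double linkage on the remaining reduced components goes through word for word — this is in fact what the phrasing "With the above results for $\mathbb P^3$ we can also prove the same statements" suggests, but the cleanest proof simply observes the $\mathbb{P}^n$ tools were available all along.
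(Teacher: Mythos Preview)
Your proposal is correct and follows the paper's approach: the paper's proof is the single sentence ``This is immediate using Proposition \ref{hyperplane sect acm}, by taking a general hyperplane section of~$\mathcal A$,'' and your induction via a general hyperplane section is exactly this, just with the bookkeeping spelled out. Your alternative of running the liaison-addition argument directly in $\mathbb P^n$ is also valid (and, as you note, Theorem \ref{LA} and Proposition \ref{BDL} are already stated in that generality), but the paper opts for the hyperplane-section shortcut.
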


\begin{proof}

This is immediate using Proposition \ref{hyperplane sect acm}, by taking a general hyperplane section of~$\mathcal A$. \end{proof}

\begin{remark}
In the next section we will give examples of  plane arrangements in $\mathbb P^3$ for which $R/\sqrt{J}$ is not Cohen-Macaulay.
\end{remark}

We now give a dual version of Corollary \ref{Pn}.

\begin{corollary}
Let $Z$ be a set of points in $\mathbb P^n$. Let  $\ell_1$ and $\ell_2$ be distinct lines in $\mathbb P^n$. Let $Z_1 = \ell_1 \cap Z$ and $Z_2 = \ell_2 \cap Z$. 
Assume that one of the following holds for every choice of  $\ell_1$ and $\ell_2$:

\begin{itemize}

\item[(i)] $|Z_1| \leq 2$ or $|Z_2| \leq 2$;

\item[(ii)] $|Z_1| \geq 3$, $|Z_2| \geq 3$, and $\ell_1 \cap \ell_2 = \emptyset$;

\item[(iii)] If $|Z_1| \geq 3$, $|Z_2| \geq 3$ and $\ell_1 \cap \ell_2 = \{ P \} \neq \emptyset$ then $P \notin Z$;

\end{itemize}

\noindent Then for the dual arrangement $\mathcal A_Z$ we have $R/\sqrt{J}$ and $R/\overline{J}$ are Cohen-Macaulay.
\end{corollary}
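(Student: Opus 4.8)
The plan is to recognize this corollary as a direct dualization of Corollary~\ref{Pn}: the three alternatives (i)--(iii) on pairs of lines are designed to be exactly equivalent to the single hypothesis of Corollary~\ref{Pn} applied to the dual arrangement $\mathcal A_Z$, so that the statement follows with no further geometry.

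First I would fix the projective duality once and for all. Writing $(\mathbb P^n)^\vee$ for the dual of the ambient space of $Z$, the dual arrangement $\mathcal A_Z$ consists of the hyperplanes $H_q \subset (\mathbb P^n)^\vee$, one for each $q \in Z$, and $J$, $\overline J$, $\sqrt J$, $\overline V$, $V^{red}$ denote the objects of Notation~\ref{main notation} attached to $\mathcal A_Z$. Every codimension-two linear subspace of $(\mathbb P^n)^\vee$ is of the form $\ell^\vee$ for a unique line $\ell \subset \mathbb P^n$, and the basic incidence $q \in \ell \iff H_q \supseteq \ell^\vee$ holds; hence the set of hyperplanes of $\mathcal A_Z$ passing through $\ell^\vee$ is exactly $\{H_q : q \in Z \cap \ell\}$.

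Next I would translate the phrase ``non-reduced component of $\overline V$'' through this dictionary. By Remark~\ref{ci in linear case}(2) when $n=3$, and in general by passing to a general hyperplane section as in the proof of Corollary~\ref{Pn}, the linear space $\ell^\vee$ supports a non-reduced component of $\overline V$ precisely when at least three hyperplanes of $\mathcal A_Z$ contain it, i.e.\ precisely when $|Z \cap \ell| \geq 3$. Moreover a hyperplane $H_q$ of $\mathcal A_Z$ contains the supports $\ell_1^\vee$ and $\ell_2^\vee$ of two \emph{distinct} non-reduced components if and only if $q \in \ell_1 \cap \ell_2$. Since $\ell_1 \neq \ell_2$ forces $\ell_1 \cap \ell_2$ to be empty or a single point $P$, the hypothesis of Corollary~\ref{Pn} for $\mathcal A_Z$ --- ``no hyperplane of $\mathcal A_Z$ contains the support of more than one non-reduced component of $\overline V$'' --- is equivalent to: there is no pair of distinct lines $\ell_1, \ell_2$ with $|Z_1| \geq 3$, $|Z_2| \geq 3$, and $\ell_1 \cap \ell_2 = \{P\}$ for some $P \in Z$. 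This is exactly the assertion that, for every pair $\ell_1 \neq \ell_2$, one of (i), (ii), (iii) holds: either some $|Z_i| \leq 2$ (so no non-reduced component is involved), or both are $\geq 3$ but the lines are disjoint, or both are $\geq 3$ and their unique common point is not in $Z$. I would then invoke Corollary~\ref{Pn} to conclude that $\overline V$, $V^{red}$ (and indeed the scheme defined by $\bigcap \mathfrak p_i^{b_i}$) are ACM, that is, $R/\overline J$ and $R/\sqrt J$ are Cohen--Macaulay.

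There is no real obstacle here: the only point requiring care is the bookkeeping that matches every non-reduced component of $\overline V$ with a line $\ell$ satisfying $|Z \cap \ell| \geq 3$ and matches ``two such components lie in a common hyperplane of $\mathcal A_Z$'' with ``the two lines share a point of $Z$.'' Once that incidence dictionary is in place, the three-way case split is immediate, and all the substantive content has already been supplied by Corollary~\ref{Pn}.
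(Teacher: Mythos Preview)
Your argument is correct and matches the paper's intended approach: the paper states this corollary without proof, introducing it simply as ``a dual version of Corollary~\ref{Pn},'' and your duality dictionary supplies precisely the translation the authors left implicit. The identification of non-reduced components with lines $\ell$ having $|Z\cap\ell|\ge 3$, and of a shared hyperplane with a common point of $Z$ on two such lines, is exactly what is needed to reduce to Corollary~\ref{Pn}.
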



\section{Illustrative Examples}

In this section we consider what can happen when the main assumption of Theorem \ref{mainthm} is not satisfied, and we study the Cohen-Macaulayness of our three different kinds of ideals (with emphasis on the first two kinds). To begin, we give an example, due to Mircea Musta\c t\v a and  the third author, which shows that the Jacobian ideal can be saturated and unmixed, but nevertheless $C = \overline C$ is not ACM.

\begin{example}[\cite{MS} Example 4.5] \label{hal ex}
Let $\mathcal A$ be the plane arrangement defined by the linear forms
\[
\begin{array}{ll}
x, \ y, \ z, \ w, \ x+y, \ x+z, \ x+w, \ y+z, \ y+w, \ z+w, \ x+y+z, \ x+y+w, \ x+z+w, \\
y+z+w, \ x+y+z+w.
\end{array}
\]
One can check that the singular locus is supported on 55 lines, of which 25 are the locus of intersection of three planes, while the remaining 30 are the intersection of two planes. Thus $\deg C = 25 \cdot 4 + 30 = 130$. However, there are many planes containing the support of two or more non-reduced components, so our theorem does not apply. And in fact, a computer check using CoCoA \cite{cocoa} give that $J$ is saturated and unmixed, but $C = \overline C$ is not ACM. Its Betti diagram is

\begin{verbatim}
        0    1    2    3
-------------------------
 0:     1    -    -    -
 1:     -    -    -    -
       ...
12:     -    -    -    -
13:     -    4    -    -
14:     -    -    -    -
15:     -    -    -    -
16:     -    -    -    -
17:     -    -    4    1
-------------------------
Tot:    1    4    4    1
\end{verbatim}

\noindent and its Hilbert polynomial is $130t - 1150$. However,  the radical $\sqrt{J}$ defines a scheme $C^{red}$ that {\em is} ACM. Its Betti diagram is

\begin{verbatim}
        0    1    2
--------------------
 0:     1    -    -
 1:     -    -    -
      ...
 8:     -    -    -
 9:     -   11   10
--------------------
Tot:    1   11   10
\end{verbatim}

\noindent  and the Hilbert polynomial is $55t - 275$.

\end{example}

\begin{example}
On the other hand, sometimes we get Cohen-Macaulayness even with a plane containing the support of three or more $X_i$. Let $\mathcal A$ be the plane arrangement in $\mathbb P^3$ defined by the linear forms 
\[
x, \ y, \ z, \ w, \ (x+y), \ (x+z), \ (x+w).
\]
Then the Jacobian ideal has Betti diagram 

\begin{verbatim}
        0    1    2
--------------------
 0:     1    -    -
 1:     -    -    -
        ...
 4:     -    -    -
 5:     -    4    -
 6:     -    -    3
--------------------
Tot:    1    4    3
\end{verbatim}

\noindent and Hilbert polynomial $24t - 64$  (in particular, $J = \overline{J}$). The radical has Betti diagram

\begin{verbatim}
        0    1    2
--------------------
 0:     1    -    -
 1:     -    -    -
 2:     -    -    -
 3:     -    -    -
 4:     -    6    5
--------------------
Tot:    1    6    5
\end{verbatim}
\noindent and Hilbert polynomial $15t-25$.

So  the lines $(x,y), (x,z), (x,w)$ all lie on the plane defined by $x$, and each of these lines has one more plane going through it, so those three lines are supports of non-reduced components of $\overline C$. But $R/J$ and $R/\sqrt{J}$ are Cohen-Macaulay (i.e. $C = \overline C$ and $C^{red}$ are both ACM).

\end{example}

\begin{example} \label{ex poss}
By looking at subsets of the planes in the Musta\c t\v a-Schenck example (Example \ref{hal ex}) we can produce arrangements to show that the ACMness of $\overline C$ and of $C^{red}$ are independent of each other.

\begin{itemize}

\item[(a)] {\em ($\overline C$ and $C^{red}$ are both ACM.)} Of course any arrangement where no three planes pass through the same line will give a star configuration, which is reduced and ACM. The smallest example  where $\overline C$ is not reduced is an arrangement of the form $xy(x+y)$. The smallest example where $\overline C$ is not reduced and the support is on at least two lines is an arrangement of the form $F = x y (x + y)(w + x + z)$. These follow easily from our methods.

\item[(b)] {\em ($C^{red}$ is ACM but $\overline C$ is not.)} As we have noticed, the Musta\c t\v a-Schenck example illustrates this possibility. 

\item[(c)] {\em ($\overline C$ is ACM but $C^{red}$ is not.)}  The smallest example that we found comes from an arrangement consisting of 8 planes, for instance the arrangement defined by 
\[
F = x y z w (x + y)( y + z)( z+w) ( w + x).
\]

\item[(d)] {\em (Neither is ACM.)} The smallest example that we found comes from an arrangement of 9 planes, for instance the arrangement defined by 
\[
F = x y z w (x + y)(y + z)(z+w)( w + x)( w + x + y + z).
\]

\end{itemize}

\end{example} 

\begin{example} \label{fat is CM}
The last example in Example \ref{ex poss} has interesting behavior for the third kind of ideal studied in this paper. Indeed, we have seen that $R/\sqrt{J}$ and $R/\overline J$ both fail to be Cohen-Macaulay for that example. Nevertheless, taking $b_i = 2$ for all  $i$ for which $a_i \geq 3$, the resulting algebra  is Cohen-Macaulay! In fact, taking each component of $\sqrt{J}$ to the power 2 (which is the symbolic square of $\sqrt{J}$ but does not arise in the construction of Corollary \ref{fat is acm}), i.e. considering $R/\bigcap \mathfrak p_i^2$, we still have Cohen-Macaulayness! It is interesting to have examples of reduced ideals that are not Cohen-Macaulay but whose symbolic square is Cohen-Macaulay.
\end{example}

\begin{example} \label{free not cm}

In this paper we have not been concerned with the question of whether the arrangement $\mathcal A$ is free or not. 
It is clear that if $\mathcal A$ is free then $R/\overline J$ is Cohen-Macaulay. But in fact there exist examples where $\mathcal A$ is free but $R/\sqrt{J}$ is not Cohen-Macaulay. For instance, taking 
$F = x y z w(x + y)( w + x)( y + z)( w + z)( w + y + z)( w + x + y + z) $ and $J$ the Jacobian ideal, we have that the Betti diagram for $R/\sqrt{J}$ is
\begin{verbatim}
        0    1    2    3
-------------------------
 0:     1    -    -    -
 1:     -    -    -    -
        ... 
 5:     -    -    -    -
 6:     -    9    9    1
-------------------------
Tot:    1    9    9    1
\end{verbatim}

\medskip

\noindent while the Betti diagram for $R/J$ is
\medskip

\begin{verbatim}
        0    1    2
--------------------
 0:     1    -    -
 1:     -    -    -
        ... 
 7:     -    -    -
 8:     -    4    -
 9:     -    -    -
10:     -    -    3
--------------------
Tot:    1    4    3
\end{verbatim}

\end{example}

\begin{example} \label{same comb}
Terao's conjecture posits that if two arrangements have the same combinatorics in their incidence then one is Cohen-Macaulay if and only if the other is. We do not know if this is true, but we remark that at least the resolutions of $J$, $\sqrt{J}$ and $\overline J$ can be different.    
Let 
\[
\begin{array}{c}
F = xyzw(x+y+z)(2x+y+z)(2x+3y+z)(2x+3y+4z)(3x+5z)(3x+4y+5z) \\
\hbox{and} \\
F' = xyzw(x+ y+ z)(2x+ y+ z)(2x+ 3y+ z)(2x+ 3y+ 4z)(x+ 3z)(x+ 2y+ 3z) 
\end{array}
\]
One can check on the computer that $F$ and $F'$ give arrangements with the same incidence lattice. Consider the minimal free resolutions of the various ideals considered in this paper.

\medskip

\begin{verbatim}
top dimensional part for F:        top dimensional part for F':

        0    1    2                        0    1    2
--------------------               --------------------
 0:     1    -    -                 0:     1    -    -
 1:     -    -    -                 1:     -    -    -
        ...                                ...
 7:     -    -    -                 7:     -    -    -
 8:     -    4    1                 8:     -    5    1
 9:     -    4    6                 9:     -    1    3
                                   10:     -    -    1
--------------------               --------------------
Tot:    1    8    7                Tot:    1    6    5
\end{verbatim}

\medskip

\begin{verbatim}
radical for F:                     radical for F':

        0    1    2                        0    1    2
--------------------               --------------------
 0:     1    -    -                 0:     1    -    -
 1:     -    -    -                 1:     -    -    -
        ...                               ...
 5:     -    -    -                 5:     -    -    -
 6:     -    4    -                 6:     -    4    1
 7:     -    -    3                 7:     -    1    3
 8:     -    1    1                 8:     -    1    1
--------------------               --------------------
Tot:    1    5    4                Tot:    1    6    5
\end{verbatim}

\medskip

\begin{verbatim}
Jacobian of F:                        Jacobian of F':

        0    1    2    3                     0    1    2    3
-------------------------            -------------------------
 0:     1    -    -    -              0:     1    -    -    -
 1:     -    -    -    -              1:     -    -    -    -
        ...                                  ...
 7:     -    -    -    -              7:     -    -    -    -
 8:     -    4    1    -              8:     -    4    1    -
 9:     -    -    -    -              9:     -    -    -    -
10:     -    -    -    -             10:     -    -    -    -
11:     -    -    -    -             11:     -    -    -    -
12:     -    -    -    -             12:     -    -    1    -
13:     -    -    6    4             13:     -    -    3    1
                                     14:     -    -    -    1
-------------------------            -------------------------
Tot:    1    4    7    4             Tot:    1    4    5    2
\end{verbatim}

\medskip

\begin{verbatim}

saturation for F:                     saturation for F':

The Jacobian is already saturated in both cases, so the Betti diagrams for 
the saturations are the same as above.
\end{verbatim}

\medskip

We also remark that the Hilbert polynomial for $R/J_F$ is $51t - 223$ while the Hilbert polynomial for $R/J_{F'}$ is $51t - 222$. This is at first somewhat surprising since the incidence lattice is the same in both cases, but the discrepancy comes because most factors of $F$ and $F'$ are only in three variables, and so this is reflecting a cone phenomenon related to the failure of saturation of the corresponding arrangement in $\mathbb P^2$ (which does not show up in the corresponding Hilbert polynomial  in $\mathbb P^2$ but does show up in the cone).

The construction used in Theorem \ref{mainthm} leads to the conclusion (using standard liaison arguments) that the  arrangements considered in that theorem {\em do} have resolutions that depend only on the combinatorics of the incidence lattice. Notice, though, that in this example the extra hypothesis of Theorem \ref{mainthm} is not satisfied. Indeed (for example), the line defined by $(x,z)$ and the line defined by $(x,x+y+z)$ both turn out to have three planes through them, and they share the plane defined by $x$.
\end{example}

\section{Graphic arrangements} \label{graphic}

We now give an application of the results of the previous sections. For a graph $G$ with vertices $v_i, i \in \{1,\ldots, n\} $ and edges $E= \overline{v_iv_j}$, the {\em graphic arrangement} $\mathcal A_G$ is defined by the hyperplanes $V(x_i-x_j)$ for every edge in $E$.
A result of Stanley  is that $A_G$ is a free arrangement if and only if $G$ is chordal, and an easy  localization argument extends this to show that if $G$ contains an induced $k-$cycle, then $S/J$ has projective dimension at least $k-1$  (see \cite{Kung-Schenck}  for both results).

\begin{corollary} \label{graphic result}

Let $G$ be a graph as above. Assume that no two 3-cycles of $G$ share an edge. For $\mathcal A_G$ denote by $J_G$ the corresponding Jacobian ideal. Then $R/\sqrt{J_G}$ and $R/{\overline J}_G$ are both Cohen-Macaulay. 

\end{corollary}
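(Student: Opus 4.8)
The plan is to deduce Corollary~\ref{graphic result} from Corollary~\ref{Pn} by checking that the combinatorial hypothesis ``no two 3-cycles of $G$ share an edge'' translates precisely into the hypothesis ``no hyperplane of $\mathcal A_G$ contains the support of more than one non-reduced component of $\overline V$.'' First I would recall the structure of the intersection lattice of a graphic arrangement: for the graphic arrangement $\mathcal A_G$ defined by the hyperplanes $V(x_i - x_j)$, the codimension two flats (equivalently, by Remark~\ref{ci in linear case}, the components of $\overline V$ and their supports) correspond to the minimal "dependent" relations among the defining forms $x_i - x_j$, and a component is non-reduced exactly when three or more of the hyperplanes pass through that codimension two linear space. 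The key observation is that three hyperplanes $V(x_i-x_j)$, $V(x_k-x_\ell)$, $V(x_m-x_p)$ meet in codimension two (rather than codimension three) precisely when the corresponding three edges form either a triangle (3-cycle) in $G$ or a path/star on three vertices; and more hyperplanes accumulate on a given codimension two flat exactly according to how many edges lie among the relevant vertex set. I would make this bookkeeping precise: a non-reduced component of $\overline V$ supported on the flat $\{x_a = x_b = x_c\}$ arises from a 3-cycle on vertices $a,b,c$ (when all three edges are present), while a non-reduced component supported on the flat $\{x_a = x_b,\ x_c = x_d\}$ with $\{a,b\}\cap\{c,d\}=\emptyset$ would require at least three edges among these four vertices inducing the same relation, which is impossible for distinct-variable forms. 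Hence every non-reduced component of $\overline V$ for a graphic arrangement is of "triangle type," supported on a flat $\{x_a=x_b=x_c\}$ coming from a 3-cycle of $G$.

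Next I would check the containment condition. A hyperplane $V(x_i - x_j)$ of $\mathcal A_G$ contains the support $\{x_a=x_b=x_c\}$ of a triangle-type non-reduced component if and only if $\{i,j\} \subseteq \{a,b,c\}$, i.e. the edge $\overline{v_iv_j}$ is one of the three edges of that 3-cycle. Therefore $V(x_i-x_j)$ contains the supports of two distinct non-reduced components of $\overline V$ if and only if the edge $\overline{v_iv_j}$ belongs to two distinct 3-cycles of $G$ — which is exactly the situation excluded by the hypothesis that no two 3-cycles of $G$ share an edge. Consequently, under the stated hypothesis no hyperplane of $\mathcal A_G$ contains the support of more than one non-reduced component of $\overline V$, so the hypothesis of Corollary~\ref{Pn} is satisfied and we conclude that $\overline V$, $V^{red}$, and $\bigcap \mathfrak p_i^{b_i}$ are ACM; in particular $R/\overline{J}_G$ and $R/\sqrt{J_G}$ are Cohen-Macaulay.

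The main obstacle, and the step requiring the most care, is the combinatorial claim that \emph{every} non-reduced component of $\overline V$ for a graphic arrangement is of triangle type — i.e. ruling out the possibility that three or more hyperplanes $V(x_i-x_j)$ accumulate on a codimension two flat of "disconnected" type $\{x_a=x_b,\ x_c=x_d\}$ or on a deeper flat. This amounts to the linear-algebra fact that among the forms $\{x_i - x_j\}$, any minimal linearly dependent subset (circuit of the graphic matroid) corresponds to a cycle of $G$, so a codimension two flat carrying $\geq 3$ hyperplanes must come from a 3-cycle. I would cite or briefly prove this via the graphic matroid / standard description of the intersection lattice of $\mathcal A_G$ (e.g. via \cite{OT} or \cite{Kung-Schenck}), after which the corollary follows immediately from Corollary~\ref{Pn}. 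One should also note, as the authors do in Example~\ref{same comb}, that the hypothesis is genuinely needed: if two 3-cycles share an edge the shared plane carries two non-reduced components and the argument — and the conclusion — can fail.
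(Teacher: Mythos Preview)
Your approach is correct and is exactly what the paper intends: the corollary is stated there without proof, the implicit argument being to verify that the hypothesis of Theorem~\ref{mainthm}/Corollary~\ref{Pn} holds for $\mathcal A_G$, which you carry out via the standard description of codimension-two flats of a graphic arrangement. One small slip: your claim that three hyperplanes $V(x_i-x_j)$ meet in codimension two ``precisely when the three edges form either a triangle or a path/star on three vertices'' is not accurate (three distinct edges on three vertices can only be a triangle, and a path or star on four vertices gives independent forms), but this is harmless since your subsequent case analysis of the two codimension-two flat types and the conclusion that non-reduced components come exactly from $3$-cycles is correct.
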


\begin{example}
If $G$ is any bipartite graph then $R/\sqrt{J_G}$ and $R/\overline{J}_G$ are Cohen-Macaulay.
\end{example}

\begin{example}
If $G$ is the 1-skeleton of the dodecahedron or of the  rhombicosidodecahedron

\medskip

\begin{center}

\includegraphics[width=4.6cm]{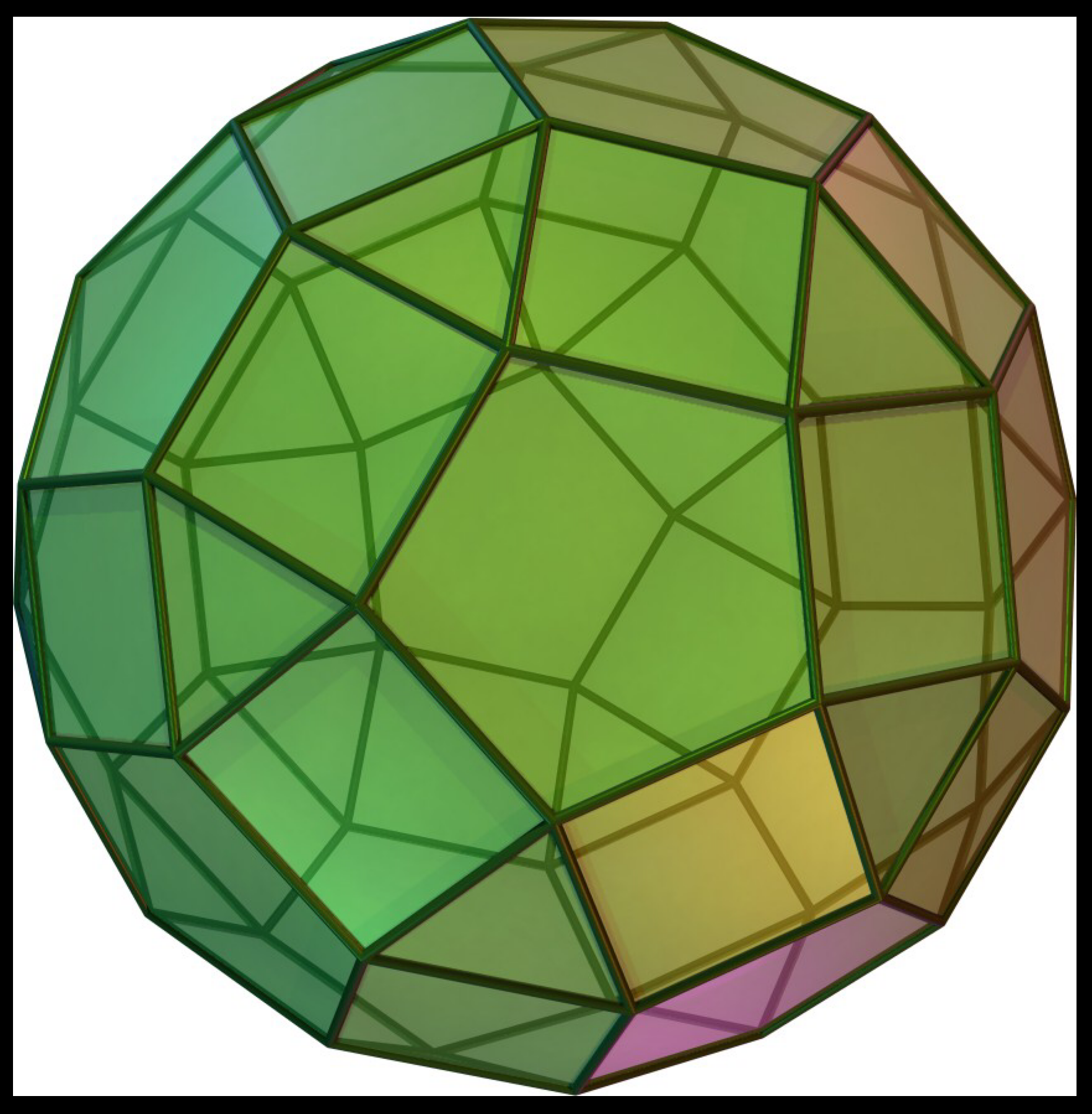}  

\end{center}

\medskip

\noindent then the corresponding algebras $R/\overline J$ and $R/\sqrt{J}$ are Cohen-Macaulay. (Picture from \linebreak wikipedia.org).
\end{example}

On the other hand, the next example shows that it is not true that either the radical or the top dimensional part is necessarily Cohen-Macaulay in general  for graphic arrangements. Thus it is good to have a sufficient condition.

\begin{example}
It is not the case that if we allow $G$ to have 3-cycles sharing an edge then $R/\sqrt{J_G}$ and $R/{\overline J}_G$ are necessarily Cohen-Macaulay. For example, take $G$ to be  the 1-skeleton of an octahedron (12 edges, 6 vertices). Then $R/\sqrt{J}$ has Betti diagram

\begin{verbatim}
        0    1    2    3
-------------------------
 0:     1    -    -    -
 1:     -    -    -    -
       ...
 8:     -    -    -    -
 9:     -   16   20    5
-------------------------
Tot:    1   16   20    5   
\end{verbatim}

\medskip

\noindent and Hilbert polynomial $ 50t - 230$,  while $R/\overline{J}$ has Betti diagram

\medskip

\begin{verbatim}
        0    1    2    3
-------------------------
 0:     1    -    -    -
 1:     -    -    -    -
       ...
 9:     -    -    -    -
10:     -    5    -    -
11:     -    1    2    -
12:     -    -    4    1
-------------------------
Tot:    1    6    6    1
\end{verbatim}

\medskip

\noindent and Hilbert polynomial $74t - 454$.
\end{example}


\section{Failure of Cohen-Macaulayness and liaison classes} \label{liaison sect}

We now give another application, which ties Jacobian ideals  of hyperplane arrangements in a more sophisticated way to liaison theory. In this section we will consider only equidimensional curves  in $\mathbb P^3$ with no embedded points. We recall that the {\em Hartshorne-Rao module} of a curve $C$ is defined by
\[
M(C) = \bigoplus_{t \in \mathbb Z} H^1(\mathbb P^3, \mathcal I_C(t)).
\]
This is a graded module over the polynomial ring $R$. The condition that $C$ is equidimensional with no embedded points implies (indeed, is equivalent to) the fact that $M(C)$ is of finite length.

Recall that two curves $C$ and $C'$ in $\mathbb P^3$ are in the same even liaison class if and only if $M(C)$ is isomorphic to some shift of $M(C')$ \cite{rao}. 
In particular, $M(C) = 0$ if and only if $C$ is ACM (recalling the classical fact that in codimension two, ACM is equivalent to being in the linkage class of a complete intersection).  Considering the exact sequence
\[
H^0(\mathcal O_{\mathbb P^3}(t)) \stackrel{\rho_t}{\longrightarrow} H^0(\mathcal O_C(t)) \rightarrow H^1(\mathcal I_C(t)) \rightarrow 0, 
\]
we see that being ACM in this setting is equivalent to the condition that the restriction map $\rho_t$ is surjective for all $t$.

This provides two natural numerical measures of the non-ACMness of $C$ as follows. Of course the simplest measure is simply $\dim_k M(C)$, ignoring the $R$-module structure of $M(C)$ completely and viewing it only as a $k$-vector space without grading. The best numerical measure of the failure of ACMness, though, is to  record, degree by degree, the failure of surjectivity of the above restrictions. That is, we can record the dimensions of the components of $M(C)$. Put differently, we can view $M(C)$ as a graded vector space and simply ignore the structure given by multiplication by elements of $R$. (Later in this section we will resume our consideration of the module structure and its connection to liaison.)

\begin{remark}
A celebrated theorem of Hartshorne and Hirschowitz \cite{HH} says that if $C$ is a general set of lines in projective space then the above restriction map on global sections always has maximal rank. Of course such a set of lines, even in $\mathbb P^3$, is never the singular locus of a hyperplane arrangement, but this result serves as a striking contrast to our situation.
\end{remark}

If $t_1$ is the smallest degree for which $\dim M(C)_t > 0$ and $t_n$ is the largest such degree, and if $a_i = \dim M(C)_i$ for $t_1 \leq i \leq t_n$, then we can consider the $n$-tuple of positive integers $(a_1,\dots,a_n)$ to measure, up to shift, the failure of ACMness, losing the shift but preserving the relative failure degree by degree. This gives the same measure of the failure of ACMness for all elements of the even liaison class of $C$. (This was known to Gaeta already in the 40's and 50's.) 

Proposition \ref{liaison addition for products} shows that in particular, if $(a_1,\dots,a_n)$ is the $n$-tuple measuring the failure of $\overline C_F$ (resp. $C_F^{red}$) to be ACM and $(b_1,\dots, b_q)$ is the $q$-tuple measuring the failure of $\overline C_G$ (resp. $C_G^{red}$) to be ACM then (assuming the hypotheses of the proposition) a suitable shifted sum of these tuples gives the failure of ACMness for the corresponding curve coming from $FG$.

It would be very nice to know which tuples $(a_1,\dots,a_n)$ arise as the failure of some $\overline C_F$ (resp. $C_F^{red}$) to be ACM. Even more ambitious would to to know which even liaison classes contain curves $\overline C_F$ (resp. $C_F^{red}$) coming from some plane arrangement in $\mathbb P^3$ (i.e. up to shift, which modules of finite length arise as the Hartshorne-Rao module of some $\overline C_F$ or $C_F^{red}$).

%

Toward this end, we show that for any integer $r \geq 0$ there is a curve $\overline C$ coming as the top dimensional part of some arrangement, where $\dim M(\overline C) = r$. We are only able to show this for the tuple $(r)$, i.e. for the case $n=1$. Notice that this uniquely determines the module structure of $M(\overline C)$. We denote the corresponding even liaison class by $\mathcal L_r$. We then give an analogous result for $C^{red}$.

\begin{theorem} \label{buchs thm}
Let $r \geq 1$ be a positive integer. Then:

\begin{itemize}

\item[\em (i)] There exists a positive integer $N$ and a product of linear forms $F$, defining an arrangement $\mathcal A_F$, such that 
\[
\dim M(\overline C_F)_{N} = r
\]
and all other components of $M(\overline C_F)$ are zero. Thus $\overline C_F$ fails by $r$ to be ACM, in the sense that the dimension of the cokernel of the restriction map is $r$ in one degree and 0 elsewhere.

\item[\em (ii)] For each $h \geq 1$ we can replace $N$ by $N+h$ and find a polynomial $G$ so that 
\[
 \dim M(\overline C_G)_{N+h} = r, 
 \]
and all other components of $M(\overline C_G)$ are zero.

\item[(iii)] The syzygy bundle for the Jacobian ideal of $F$ (or $G$) is locally free.
\end{itemize}
\end{theorem}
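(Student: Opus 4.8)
The plan is to build the arrangement explicitly by starting from the Musta\c t\v a--Schenck example (Example \ref{hal ex}), whose top-dimensional part $\overline C$ has Hartshorne-Rao module concentrated in a single degree with dimension $4$, and then to manipulate this building block using the liaison-theoretic tools already developed: liaison addition for products (Proposition \ref{liaison addition for products}) to boost the dimension $r$, and basic double linkage for arrangements (Proposition \ref{BDL for arrangements}) to shift the single nonzero degree. First I would take the arrangement $\mathcal A_0$ of Example \ref{hal ex}, for which $M(\overline C_0)$ is nonzero only in one degree, say $N_0$, with $\dim M(\overline C_0)_{N_0} = 4$. To obtain an arbitrary $r$, I would take $r$ copies of (a generic projective-linear image of) this arrangement, chosen so that the defining products $F_1,\dots,F_r$ are pairwise "independent" in the sense required by Proposition \ref{liaison addition for products} — i.e.\ no $F_j$ vanishes on any component of $C_{F_i}^{red}$ for $i \neq j$, which can be arranged by applying $r$ sufficiently general coordinate changes so that the $55r$ supporting lines are in sufficiently general position relative to one another. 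Then by iterated application of Proposition \ref{liaison addition for products}, $M(\overline C_{F_1\cdots F_r})$ is a direct sum of shifts of the $M(\overline C_{F_i})$; by choosing the degrees of the $F_i$ so that all the shifts coincide (they all have the same degree $d = 15$ if the generic changes of coordinates preserve the degree, which they do), the module $M(\overline C_{F_1\cdots F_r})$ is concentrated in a single degree $N$ with dimension $4r$. This gives multiples of $4$; to hit every $r \geq 1$ I would instead mix in smaller building blocks — e.g.\ the arrangement defined by $xy(x+y)$ contributes a one-dimensional module in a single degree, as noted in Example \ref{ex poss}(a) — and combine an appropriate number of each type via Proposition \ref{liaison addition for products}, again aligning the shifts by padding with basic double links.

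For part (i) the one remaining point is to verify that each such building block really does have $M$ concentrated in a single degree; for the Musta\c t\v a--Schenck block this is the explicit computation recorded in Example \ref{hal ex} (the Betti table shows $\overline C_0$ fails to be ACM, and the shape of the resolution — a single "extra" syzygy strand — forces the Rao module into one degree), and for the $xy(x+y)$-type block it is an elementary direct computation. Part (ii) is then immediate from Proposition \ref{BDL for arrangements}(i): multiplying $F$ (or $G$) by a general linear form $L$ not vanishing on any component of $C_F^{red}$ shifts the module by one, $M(\overline C_{LF}) \cong M(\overline C_F)(-1)$, and iterating $h$ times shifts it by $h$, while by Proposition \ref{BDL for arrangements}(iii) the even liaison class is unchanged. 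Part (iii) follows from Corollary \ref{loc free}: the syzygy bundle of the Jacobian ideal of each building block $F_i$ is locally free (for the $55$-line example this is checked directly; for $xy(x+y)$ it is a free module hence trivially has locally free sheafification), and Corollary \ref{loc free}(i) propagates local freeness through the product $F_1\cdots F_r$ while Corollary \ref{loc free}(ii) propagates it through each basic double link by $L$.

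The main obstacle I expect is the bookkeeping needed to make all the shifts line up so that the sum really is supported in a \emph{single} degree: liaison addition shifts the $i$-th summand by the \emph{complementary} degree $\deg F - \deg F_i$ (in the notation of Proposition \ref{liaison addition for products}, $M(\overline C_{FG}) \cong M(\overline C_F)(-p) \oplus M(\overline C_G)(-m)$), so one must either use building blocks all of the same degree, or insert the right number of basic double links (each costing one linear form, i.e.\ bumping a degree by $1$) to re-synchronize the pieces before the next liaison addition. Concretely one would fix the target degree $N$ in advance, then for each block compute how many extra general linear forms to multiply in so that its contribution lands exactly in degree $N$, checking at each stage that the genericity hypotheses of Propositions \ref{liaison addition for products} and \ref{BDL for arrangements} (no factor vanishing on a component of the relevant $C^{red}$) and of Corollary \ref{loc free} continue to hold — which they do, since at every stage only finitely many "bad" hyperplanes must be avoided. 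This is routine but must be done carefully; once it is in place, parts (i)--(iii) follow by assembling the cited propositions as above.
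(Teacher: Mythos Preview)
Your overall strategy---take a building block whose Hartshorne--Rao module is concentrated in one degree, combine $r$ general projective-linear copies via Proposition~\ref{liaison addition for products}, and then shift with Proposition~\ref{BDL for arrangements}---is exactly the paper's approach. But two concrete claims are wrong, and together they make the argument as written fail.

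First, the Musta\c t\v a--Schenck curve does \emph{not} have $\dim M(\overline C_0)=4$. Look again at the Betti diagram in Example~\ref{hal ex}: the last free module is $R(-20)^1$, and the penultimate one is $R(-19)^4$. By Rao's theorem (which the paper invokes explicitly) the tail $0\to L_4\to L_3$ of the resolution of $M(\overline C_0)$ is $0\to R(-20)\to R(-19)^4$; dualizing, $M(\overline C_0)^\vee$ is cyclic with four independent linear relations, hence $M(\overline C_0)^\vee\cong k$ and $\dim M(\overline C_0)=1$, not $4$. (The $4$'s in the Betti table are a red herring.) Second, Example~\ref{ex poss}(a) lists $xy(x+y)$ precisely as a case where $\overline C$ \emph{is} ACM, so its Hartshorne--Rao module is zero, not one-dimensional; indeed $J=(F_x,F_y)$ is a complete intersection of two conics. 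So your proposed ``small block'' contributes nothing.

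The good news is that once you correct the first error, the second becomes irrelevant: with a building block of module dimension~$1$, taking $r$ copies under general coordinate changes and iterating liaison addition gives dimension exactly $r$, concentrated in a single degree because all blocks have equal degree and the shifts telescope (as you yourself note). That is precisely what the paper does---only it uses the smaller $9$-plane arrangement $xyzw(x+y)(y+z)(z+w)(w+x)(w+x+y+z)$ of Example~\ref{ex poss}(d) in place of the $15$-plane Musta\c t\v a--Schenck example, and it spells out the Rao-theorem computation showing the module is one-dimensional. Your treatments of parts (ii) and (iii) via Proposition~\ref{BDL for arrangements} and Corollary~\ref{loc free} are correct and match the paper.
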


\begin{proof}
For the building block for our construction we could use the Musta\c t\v a-Schenck example (Example \ref{hal ex}), but in the interest of efficiency we take the subarrangement mentioned in Example \ref{ex poss} (d), namely 
$F = x y z w (x + y)(y + z)(z+w)( w + x)( w + x + y + z)$ which has 9 planes. For this we have the Betti diagram

\vfill \eject
\begin{verbatim}
        0    1    2    3
-------------------------
 0:     1    -    -    -
 1:     -    -    -    -
 2:     -    -    -    -
 3:     -    -    -    -
 4:     -    -    -    -
 5:     -    -    -    -
 6:     -    -    -    -
 7:     -    4    -    -
 8:     -    -    -    -
 9:     -    -    4    1
-------------------------
Tot:    1    4    4    1
\end{verbatim}

A theorem of Rao (\cite{rao} Theorem (2.5)) gives that if $Y$ is a curve in $\mathbb P^3$ with Hartshorne-Rao module $M(Y)$, and if the latter has minimal free resolution
\[
0 \rightarrow L_4 \stackrel{\sigma}{\longrightarrow} L_3 \rightarrow L_2 \rightarrow L_1 \rightarrow L_0 \rightarrow M(Y) \rightarrow 0,
\]
then $R/I_Y$ has a minimal free resolution of the form
\[
0 \rightarrow L_4 \stackrel{(\sigma_4,0)}{\longrightarrow} L_3 \oplus A \rightarrow B \rightarrow R \rightarrow R/I_Y \rightarrow 0
\]
where $A$ and $B$ are free modules. In our situation, since $M(\overline C)$ has finite length, and since the  resolution for $M(\overline C)$ is the dual of the resolution for $M(\overline C)^\vee$ (up to shift), we conclude that $M(\overline C)^\vee$ has minimal presentation
\[
R(-c-1)^4 \rightarrow R(-c) \rightarrow M(\overline C)^\vee \rightarrow 0
\]
for some integer $c$.
Hence $M(\overline C)^\vee \cong k \cong M(\overline C)$   (these isomorphisms are up to shift) is a one-dimensional $k$-vector space. Thus for our example, $\overline C$ is in the liaison class of two skew lines. 

Having this convenient building block, the rest follows from Proposition \ref{liaison addition for products} and Proposition \ref{BDL for arrangements}. If our $n$-tuple measuring the failure of ACMness is simply (1) as is the case for our building block, we can  take as our form $F$ the 9 forms in our example. This produces a curve $\overline C$ of degree 42, and one can check that $M(\overline C)$ occurs in degree 8.

Suppose we keep the tuple $(1)$ but we want an arrangement where the failure of ACMness comes in degree 9 instead of 8. By Proposition \ref{BDL for arrangements} we simply consider the arrangement associated to the form $LF$, where $L$ is a general linear form. The corresponding unmixed curve consists of the original $\overline C$ together with the complete intersection of $L$ and $F$. In other words, it is obtained from the original curve by basic double linkage using $F \in I_{\overline C}$ and a general $L \in [R]_1$. Hence the saturated ideal of this new curve is $L \cdot I_{\overline C} + F$ and the Hartshorne-Rao module is shifted by one, as desired. This can be done as often as we like, obtaining any rightward shift of $M(\overline C)$ as coming from a suitable planar arrangement.

Now we pass to dimension $r \geq 2$. To begin, we make $r$ copies of the building block by applying  general changes of coordinates. This guarantees that the corresponding arrangements, say $\mathcal A_{F_1}, \dots, \mathcal A_{F_r}$ will satisfy the hypotheses of Proposition \ref{liaison addition for products}. Note that $F_1,\dots,F_r$ all have the same degree, namely 9. 

\medskip

\underline{Claim}: {\em $\mathcal A_{F_1 \cdots F_r}$ produces the desired curve in $\mathcal L_r$.}

\medskip

To obtain the first of these, namely the tuple (2), we simply consider the curve defined by the saturated ideal
\[
F_2 \cdot I_{\overline C_{F_1}} + F_1 \cdot I_{\overline C_{F_2}}.
\]
(The only non-reduced components come from factors of $F_1$ and  from factors of $F_2$ so they are already accounted for in $\overline C_{F_1}$ and $\overline C_{F_2}$, and the only new components come from the complete intersection of $F_1$ and $F_2$.) This comes from the arrangement $\mathcal A_{F_1 F_2}$. 
The resulting curve $\overline C_{F_1 F_2}$ has Hartshorne-Rao module of dimension 2  in degree $8 + 9 = 17$. The degree of this curve is $42 + 42 + (9)(9) = 165$.

For the next curve we form the saturated ideal
\[
F_3 \cdot I_{\overline C_{F_1 F_2}} + F_1 F_2 \cdot I_{\overline C_{F_3}}.
\]
By the same reasoning, the resulting curve is $\overline C_{F_1 F_2 F_3}$. Its Hartshorne-Rao module has dimension 3, occurring in degree $17 + 9 = 8 + 18 = 26$. Proceeding inductively gives the claim. This completes the proof of (i).

For (ii), we can produce any rightward shift of curves we have constructed by applying Proposition \ref{BDL for arrangements}. 

For (iii) one can check on the computer that the Jacobian ideal is not saturated, but its saturation is unmixed (but not Cohen-Macaulay), so its syzygy bundle is locally free. Then apply Corollary \ref{loc free}.
\end{proof}

We summarize the above result more compactly and observe a consequence.

\begin{corollary} \label{buchs even liaison}
Let $r \geq 1$ be a positive integer. Let $\mathcal L_r$ be the (Buchsbaum) even liaison class whose associated Hartshorne-Rao module (up to shift) has dimension $r$ supported in only one degree. Then $\mathcal L_r$  contains infinitely many curves (of infinitely many degrees, lying in infinitely many shifts of $\mathcal L_r$) arising as the top dimensional part of the unmixed scheme coming from some arrangement in $\mathbb P^3$.
\end{corollary}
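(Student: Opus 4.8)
The plan is to read the statement off Theorem~\ref{buchs thm} together with Rao's correspondence between even liaison classes and Hartshorne--Rao modules (up to shift). First I would check that the curve $\overline C_F$ produced in Theorem~\ref{buchs thm}(i) actually lies in $\mathcal L_r$. By construction $M(\overline C_F)$ is an $r$-dimensional $k$-vector space concentrated in the single degree $N$; since $M(\overline C_F)_{N+1}=0$, multiplication by $[R]_1$, and hence by $\mathfrak m$, annihilates $M(\overline C_F)$, so as a graded $R$-module $M(\overline C_F)\cong k^r$ up to a shift. This is precisely the module attached to the Buchsbaum class $\mathcal L_r$ (the adjective ``Buchsbaum'' being exactly the statement that the module is killed by $\mathfrak m$), and therefore $\overline C_F\in\mathcal L_r$ by \cite{rao}.

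Next I would invoke Theorem~\ref{buchs thm}(ii): for every $h\ge 1$ there is a product of linear forms $G=G_h$, defining an arrangement in $\mathbb P^3$, whose top-dimensional curve $\overline C_{G_h}$ has $\dim M(\overline C_{G_h})_{N+h}=r$ and all other graded pieces zero. By the argument of the previous paragraph each $\overline C_{G_h}$ again belongs to $\mathcal L_r$, but its Hartshorne--Rao module sits at the shift determined by $h$; as $h$ ranges over the positive integers these are infinitely many distinct shifts. In particular the curves $\overline C_{G_h}$ are pairwise distinct, since a curve determines its Hartshorne--Rao module and the modules here are supported in distinct degrees.

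It then remains to see that infinitely many degrees occur among these curves. From the construction in the proof of Theorem~\ref{buchs thm}(ii), $\overline C_{G_h}$ is obtained from $\overline C_F$ by $h$ successive basic double links, each one using a general linear form $L$ together with the defining polynomial of the current curve, in the form of Proposition~\ref{BDL for arrangements}. By Proposition~\ref{BDL} each such step adjoins to the curve the complete intersection of $L$ with a form of degree at least $9$, hence strictly increases the degree of the curve; so $\deg \overline C_{G_h}\to\infty$ as $h\to\infty$ and infinitely many degrees are realized. Combining these three steps yields the corollary.

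I do not expect a serious obstacle here, since all the substance is already in Theorem~\ref{buchs thm} (and, via Proposition~\ref{liaison addition for products}, in liaison addition). The only points requiring care are the identification of $M(\overline C_F)$ with the module defining $\mathcal L_r$ --- which works precisely because a finite-length module concentrated in one degree has no choice of $R$-module structure --- and the bookkeeping showing that the shifts, and consequently the degrees and the curves themselves, are genuinely infinite in number.
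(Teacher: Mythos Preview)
Your proposal is correct and follows the same approach as the paper, which simply states that the corollary follows immediately from Theorem~\ref{buchs thm}. You have spelled out in detail the points the paper leaves implicit---the identification of the module with $k^r$ up to shift, the pairwise distinctness of the shifts, and the growth of degrees under basic double linkage---but the route is the same.
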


\begin{proof}
This follows immediately from Theorem \ref{buchs thm}. 
\end{proof}

\begin{remark}
If we had used the Musta\c t\v a-Schenck example (Example \ref{hal ex}) for our building block in Theorem \ref{buchs thm}, we would be able to add a second part to the above corollary saying that the radicals of our constructed ideals are Cohen-Macaulay. However, we preferred to use the smallest degrees possible.
\end{remark}

We now show that the same approach gives an analogous result for the radicals of Jacobian ideals.

\begin{corollary} \label{main thm for rad}
Let $r \geq 1$ be a positive integer. Let $\mathcal L_r$ be the (Buchsbaum) even liaison class whose associated Hartshorne-Rao module (up to shift) has dimension $r$ supported in only one degree. Then $\mathcal L_r$  contains infinitely many reduced curves (of infinitely many degrees, lying in infinitely many shifts of $\mathcal L_r$) whose homogeneous ideal is the radical of the Jacobian ideal of some arrangement in $\mathbb P^3$.
\end{corollary}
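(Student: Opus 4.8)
The plan is to repeat the proof of Theorem~\ref{buchs thm} and Corollary~\ref{buchs even liaison} almost verbatim, systematically replacing the top-dimensional part $\overline C$ (and the ideal $\overline J$) by the radical scheme $C^{red}$ (and the ideal $\sqrt{J}$); the only genuinely new ingredient is the choice of building block. For that I would take the $8$-plane arrangement of Example~\ref{ex poss}(c), namely $F_0 = xyzw(x+y)(y+z)(z+w)(w+x)$, for which $C_{F_0}^{red}$ is \emph{not} ACM (the $9$-plane arrangement of Example~\ref{ex poss}(d) would serve as well, at the price of larger degrees). By Remark~\ref{ci in linear case}, $C_{F_0}^{red}$ is a reduced union of lines, hence locally Cohen--Macaulay, equidimensional of codimension two, with $F_0 \in I_{C_{F_0}^{red}}$, and it is exactly the reduced scheme attached to an arrangement. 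To identify its even liaison class I would compute the minimal free resolution of $R/\sqrt{J_{F_0}}$ and argue as in the proof of Theorem~\ref{buchs thm}: since $M(C_{F_0}^{red})$ has finite length, Rao's theorem (\cite{rao} Theorem~(2.5)) forces this resolution to have the appropriate split shape, and dualizing gives $M(C_{F_0}^{red})^\vee$ a minimal presentation $R(-c-1)^4 \to R(-c) \to M(C_{F_0}^{red})^\vee \to 0$, so that $M(C_{F_0}^{red}) \cong M(C_{F_0}^{red})^\vee \cong k$ up to shift; equivalently, one simply checks on the computer that $\dim_k M(C_{F_0}^{red}) = 1$. Thus $C_{F_0}^{red} \in \mathcal L_1$.

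For $r \ge 2$ I would take $r$ copies $F_1,\dots,F_r$ of $F_0$ under independent general changes of coordinates, so that the arrangements $\mathcal A_{F_i}$ pairwise satisfy the hypothesis of Proposition~\ref{liaison addition for products} (this is automatic, since each $C_{F_i}^{red}$ is a finite union of lines and a general change of coordinates moves these off the finitely many planes of every other copy), and then iterate liaison addition exactly as in Theorem~\ref{buchs thm}: at the $k$-th stage the ideal $F_{k+1}\cdot I_{C_{F_1\cdots F_k}^{red}} + (F_1\cdots F_k)\cdot I_{C_{F_{k+1}}^{red}}$ is, by Proposition~\ref{liaison addition for products}, the saturated ideal $I_{C_{F_1\cdots F_{k+1}}^{red}}$, and because all $F_i$ have the same degree the shift bookkeeping in that proof gives $M(C_{F_1\cdots F_r}^{red}) \cong k^r$ concentrated in a single degree. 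Hence $C_{F_1\cdots F_r}^{red} \in \mathcal L_r$, and it is cut out by the radical of the Jacobian ideal of $\mathcal A_{F_1\cdots F_r}$. To obtain infinitely many shifts and degrees I would apply Proposition~\ref{BDL for arrangements} with a general linear form $L$ (not vanishing on any of the finitely many lines of the current curve), which replaces $M$ by $M(-1)$ and strictly raises the degree; iterating this yields, for each $r$, infinitely many reduced curves in $\mathcal L_r$, of infinitely many degrees and in infinitely many shifts, each defined by a radical Jacobian ideal of an arrangement in $\mathbb P^3$.

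The step I expect to be the main obstacle is the verification of the building block — that $M(C_{F_0}^{red})$ is exactly one-dimensional (and so, automatically, supported in a single degree). The excerpt records the Betti diagrams of $R/J$ and $R/\overline J$ for these small arrangements but not always that of $R/\sqrt{J}$, so this computation must be carried out and the resolution of $R/\sqrt{J_{F_0}}$ checked to have the form Rao's theorem dictates for $M \cong k$. Should the chosen $F_0$ fail — for instance if its Hartshorne--Rao module had dimension $>1$ or were spread over several degrees — one would hunt among other subarrangements of the Musta\c t\v a--Schenck example (Example~\ref{hal ex}), keeping in mind that that example itself cannot serve as the block here, since there $C^{red}$ is ACM. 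Everything downstream of the building block is a mechanical transcription of the argument of Theorem~\ref{buchs thm}, with $\overline C$, $\overline J$ replaced by $C^{red}$, $\sqrt{J}$ throughout.
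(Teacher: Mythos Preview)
Your proposal is correct and follows essentially the same approach as the paper: the paper's proof also reduces everything to finding a building block with $M(C^{red})\cong k$ and then reruns the liaison-addition / basic-double-link machinery of Theorem~\ref{buchs thm} verbatim for $C^{red}$ in place of $\overline C$. The only difference is the choice of building block: the paper does not use the $8$-plane arrangement of Example~\ref{ex poss}(c) but instead exhibits a different $8$-plane arrangement, $F = yz(x+y)(x+z)(w+x)(x+y+z)(w+x+y)(w+x+z)$, and displays the Betti diagram of $R/\sqrt{J}$ to certify $M(C^{red})\cong k$; so your anticipated ``main obstacle'' is exactly the one step the paper fills in by computation, and your fallback of searching among small subarrangements is precisely what the paper does.
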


\begin{proof}
The proof is identical, except that we need a suitable building block. One can check that there are arrangements of 8 planes that will do the trick. For example, 
\[
F = yz(x + y)( x + z)( w + x)( x + y + z)( w + x + y)( w + x + z)
\]
gives a Jacobian ideal whose radical has Betti diagram
\begin{verbatim}
        0    1    2    3
-------------------------
 0:     1    -    -    -
 1:     -    -    -    -
 2:     -    -    -    -
 3:     -    -    -    -
 4:     -    -    -    -
 5:     -    8    8    1
-------------------------
Tot:    1    8    8    1
\end{verbatim}
\noindent and this clearly has Hartshorne-Rao module of dimension 1, supported in degree 4. 
\end{proof}

\begin{example} \label{not all classes}
It is also natural to ask how many (all?) even liaison classes contain curves of the form $\overline C$ or $C^{red}$. In any case it is not true that $M(\overline C)$ or $M(C^{red})$ is necessarily supported in only one degree. For example, for 

\[
F = {\small 
\begin{array}{l}
w x y z (w + 3x + 5y + 7z)( w + x)( w + y)( w + z)( 2w + 3x + 5y + 7z)( x + y)( x + z) \\
( w + 4x + 5y + 7z)( y + z)( w + 3x + 6y + 7z)( w + 3x + 5y + 8z)( w + x + y)( w + x + z) \\
( 2w + 4x + 5y + 7z)( w + y + z)( 2w + 3x + 6y + 7z)( 2w + 3x + 5y + 8z)( x + y + z) \\
( w + 4x + 6y + 7z)( w + 4x + 5y + 8z)( w + 3x + 6y + 8z)( w + x + y + z)( 2w + 4x + 6y + 7z) \\
( 2w + 4x + 5y + 8z)( 2w + 3x + 6y + 8z)( w + 4x + 6y + 8z)( 2w + 4x + 6y + 8z)
\end{array}   }
\]

\noindent the Betti diagram of $R/\overline J$ is
\begin{verbatim}
        0    1    2    3
-------------------------
 0:     1    -    -    -
 1:     -    -    -    -
         ...
28:     -    -    -    -
29:     -    5    -    -
30:     -    -    -    -
         ...
34:     -    -    -    -
35:     -    5   10    -
36:     -    -    -    -
37:     -    -    -    1
-------------------------
Tot:    1   10   10    1
\end{verbatim}

\medskip

\noindent Since the last matrix in the minimal free resolution is a $(1 \times 10)$ matrix of cubics, the Hartshorne-Rao module cannot possibly be supported in one degree.

And for the same arrangement, the Betti diagram of $R/\sqrt{J}$ is

\vfill \eject

\begin{verbatim}
        0    1    2    3
-------------------------
 0:     1    -    -    -
 1:     -    -    -    -
        ...
22:     -    -    -    -
23:     -   28   27    5
24:     -    9   26   12
-------------------------
Tot:    1   37   53   17
\end{verbatim}


\medskip

\noindent Thus the dual module $M(C^{red})^\vee$ is generated in more than one degree, so $M(C^{red})$ cannot be supported in one degree.
\end{example}

\begin{remark}
In the construction from Theorem \ref{buchs thm}, to produce the curve in $\mathcal L_2$ we required $9+9=18$ planes. We remark that there exists a set of 11 planes for which the top dimensional part of the Jacobian ideal lies in this liaison class, namely that coming from $F = xy z w (x + y)( x + z)( w + x)( y + z)( w + y)( w + z)( w + x + y + z)$. Its Betti diagram is

\begin{verbatim}
        0    1    2    3
-------------------------
 0:     1    -    -    -
 1:     -    -    -    -
         ...
 8:     -    -    -    -
 9:     -    4    -    -
10:     -    -    -    -
11:     -    3    8    2
-------------------------
Tot:    1    7    8    2
\end{verbatim}

\medskip

\noindent One can check  on the computer that the corresponding Hartshorne-Rao module is, indeed, 2-dimensional, only in degree 10, by looking at the last map in the resolution.
It follows from this, using our methods,  that there is an arrangement of $11+9 = 20$ planes whose corresponding Hartshorne-Rao module is 3-dimensional, and an arrangement of $11+11 = 22$ planes whose module is 4-dimensional. 
Turning to the radical, we can make a similar analysis. The point, though, is that 
 our construction is  not optimal.

\end{remark}


\section{Open questions} \label{open q}

For simplicity we focus on the case of planar arrangements in $\mathbb P^3$.

\begin{enumerate}

\item Corollary \ref{buchs even liaison} shows that every (Buchsbaum) even liaison class whose associated \linebreak Hartshorne-Rao module is supported in one degree contains curves arising as the top dimensional part of the Jacobian ideal of some planar arrangement, as well as curves arising as the radical of the Jacobian ideal of some planar arrangement. Example \ref{not all classes} shows that there are other even liaison classes containing such curves. Can we classify the even liaison classes 
 (in terms of the Hartshorne-Rao module) that contain such curves? Is it possible that all even liaison classes do? A possible test case for a negative answer is the Hartshorne-Rao module that is one-dimensional in each of two consecutive degrees, with the multiplication $\times L$ from the first component to the second always zero. More generally, an interesting special case is the following:

\medskip

\begin{quotation}
{\em 
If $\mathcal L$ is an even Buchsbaum liaison class of curves in $\mathbb P^3$ containing curves that arise as the top dimensional part of the Jacobian ideal of some hyperplane arrangement, must the corresponding Hartshorne-Rao module be supported in one degree?}
\end{quotation}

\medskip

\noindent We can ask the same questions for the radical of the Jacobian ideals.

\item Let $\mathcal L$ be an even liaison class that contains curves arising either as the top dimensional part of the Jacobian ideal, or as the radical of the Jacobian ideal, of some planar arrangement in $\mathbb P^3$. Is there any structure for the collection of such curves, analogous in some way to the Lazarsfeld-Rao property \cite{BM2}, \cite{BBM}?


\item  Suppose that some planes of $\mathcal A$ contain the support of two or more non-reduced components of $\overline C$. Under what conditions does $\overline C$ still have to be ACM? 

\item Terao's conjecture says that the question of whether $\mathcal A_F$ is free (i.e. whether $R/J_F$ is Cohen-Macaulay) depends only on the combinatorics of the incidence lattice $L(\mathcal A_F)$. Is it similarly true that the question of whether $R/\sqrt{J_F}$ and/or $R/\overline J_F$ are Cohen-Macaulay depends only on the combinatorics of $L(\mathcal A_F)$? Notice that Example \ref{same comb} shows that in any case the Betti diagram can change even among arrangements with the same lattice, but the question of Cohen-Macaulayness is not yet so clear.

\item Can we extend these results to $\mathbb P^n$? In this case there is a collection of Hartshorne-Rao modules, $M^i(X) = \bigoplus_{t \in \mathbb Z} H^i(\mathcal I_X(t))$, $1 \leq i \leq \dim X$. How do the higher values of $i$ play a role?  Is it more efficient to consider the stable equivalence classes of the sheafification of the syzygy modules instead of considering $M^i(X)$?

\end{enumerate}


\end{document}